\newtheorem{theorem}{Theorem}
\newtheorem{corollary}[theorem]{Corollary}
\newtheorem{definition}[theorem]{Definition}
\newtheorem{lemma}[theorem]{Lemma}
\newtheorem{proposition}[theorem]{Proposition}
\theoremstyle{remark}
\newtheorem{remark}[theorem]{Remark}
\newtheorem{example}[theorem]{Example}
 \renewcommand{\phi}{\varphi}
\newcommand{\N}{\mathbb{N}}
\newcommand{\Q}{\mathbb{Q}}
\newcommand{\R}{\mathbb{R}}
\DeclareMathOperator{\supp}{supp}
\newcommand{\bes}{\begin{subequations}}
\newcommand{\ees}{\end{subequations}}
\newcommand{\eea}{\end{eqnarray}}
\renewcommand{\epsilon}{\varepsilon}
\DeclareMathOperator{\proj}{proj}
\newcommand{\fourIdx}[5]{%
\setbox1=\hbox{\ensuremath{^{#1}}}%
 \setbox2=\hbox{\ensuremath{_{#2}}}%
 \setbox5=\hbox{\ensuremath{#5}}%
 \hspace{\ifnum\wd1>\wd2\wd1\else\wd2\fi}%
 \ensuremath{\copy5^{\hspace{-\wd1}\hspace{-\wd5}#1\hspace{\wd5}#3}%
 _{\hspace{-\wd2}\hspace{-\wd5}#2\hspace{\wd5}#4}%
 }}
\numberwithin{equation}{section}
\numberwithin{theorem}{section}
\renewcommand{\subset}{\subseteq}
\renewcommand{\supset}{\supseteq}
\renewcommand{\mathrm}{}
\newcommand{\mylabel}[2]{#2\def\@currentlabel{#2}\label{#1}}
\begin{document}

\title{Stability of martingale optimal transport and weak optimal transport}
\author{J. Backhoff-Veraguas}
\author{G. Pammer}

\begin{abstract}

Under mild regularity assumptions, the transport problem is stable in
the following sense: if a sequence of optimal transport plans $\pi^1,
\pi^2, \ldots$  converges weakly to a transport plan $\pi$, then $\pi$ is
also optimal (between its marginals).

Alfonsi, Corbetta and Jourdain \cite{AlCoJo17b}
asked whether the same property is true for the martingale transport
problem. This question seems particularly pressing since
martingale transport is motivated by robust finance where data is naturally
 noisy.  On a technical level, stability in the martingale case
appears more intricate than for classical transport since martingale optimal
transport plans are not characterized by a `monotonicity'-property of their supports.

In this paper we give a positive answer and establish stability of the
martingale transport
problem. As a particular case, this recovers the stability of the left curtain coupling established by Juillet \cite{Ju16}. An important auxiliary tool is an unconventional topology
which takes the temporal structure of
martingales into account. Our techniques also apply to the the weak
transport problem introduced by Gozlan, Roberto, Samson and Tetali.

 \medskip

\noindent \emph{Keywords: stability, martingale transport, weak transport, causal transport, weak
adapted topology, robust finance.}
\end{abstract}
\maketitle

\section{Introduction and main results}

Let $X$ and $Y$ be Polish spaces and consider a continuous function $c\colon X\times Y\rightarrow [0,\infty)$. Given probability measures $\mu \in \mathcal P(X)$ and $\nu\in\mathcal P(Y)$, the classical transport problem is 
\begin{equation}\tag{OT}\label{OT}
	\inf_{\pi \in \Pi(\mu,\nu)} \int_{X\times Y} c(x,y) \, \pi(dx,dy),
\end{equation}
where $\Pi(\mu,\nu)$ denotes the set of couplings with $X$-marginal $\mu$ and $Y$-marginal $\nu$. A classical result in optimal transport asserts   that $\pi\in \Pi(\mu,\nu)$ is optimal for \eqref{OT} iff its support $\supp
\pi$ is $c$-{cyclically} monotone \cite{Vi03,Vi09}. 
 One useful consequence of this characterization of optimality is the stability of \eqref{OT} with respect to the marginals $\mu,\nu$ as well as the cost function $c$. Indeed, the link between monotonicity and stability becomes apparent once one realizes that the notion of monotonicity is itself stable.

In this article we consider the martingale optimal transport problem from the point of view of monotonicity and stability.
In fact, since this problem is an instance of a weak optimal transport problem {(where the cost is singular, i.e., also takes the value $+\infty$)}, we will likewise study the latter class of problems {for regular cost functions} from this viewpoint.

\subsection{Stability of martingale optimal transport}

The martingale optimal transport problem is a variant of \eqref{OT} stemming from robust mathematical finance (cf.\ \cite{HoNe12, BeHePe12, TaTo13, GaHeTo13, DoSo12, CaLaMa14,BeCoHu14, BaBeHuKa17,JoMa18,Ju16,ObSi17,DeTo17,HuTr17,GhKiLi16b,GuLoWa19} among many others). In order to define this problem, we take $X=Y=\R$, suppose that $\mu,\nu$ have finite first moments, and introduce the set $\Pi_M(\mu,\nu)$ of martingale couplings with marginals $\mu, \nu$. To be precise, a transport plan $\pi$ is a martingale coupling iff
\[ \int_{\R} y \, \pi_x(dy) = x\quad \mu\text{-a.s.,}\]
where $\{\pi_x\}_{x \in \R}$ denotes a regular disintegration of the second coordinate given the first one. By a famous result of Strassen, the set $\Pi_M(\mu,\nu)$ is non-empty iff $\mu$ is smaller than $\nu$ in convex order. The martingale optimal transport problem\footnote{The multidimensional version of the martingale transport problem is defined analogously, although the mathematical finance application is less clear.} is given by
\begin{equation}\tag{MOT}\label{MOT}
	\inf_{\pi\in\Pi_M(\mu,\nu)} \int_{\R\times\R} c(x,y) \, \pi(dx,dy),
\end{equation}
{under the convention that the infimum over the empty set is $+\infty$.}

The main result of the article is the stability of {\eqref{MOT}}. 
This gives a positive answer  to the question posed by Alfonsi, Corbetta and Jourdain in \cite[Section 5.3]{AlCoJo17b} in the case $d=1$.

{Let $r \geq 1$.}
 We denote by $\mathcal P_{r}(\R)$ the set of probability measures with finite {$r$th-moments} and by $\mathcal W_{r}$ the topology of $r$-Wasserstein convergence on  $\mathcal P_r(\R)$, cf.\ \cite{Vi03}.

\begin{theorem}[MOT Stability]\label{Thm stab mot intro}
	Let $c,c_k \colon \R \times \R \rightarrow [0, \infty)$, $k\in\N$, be  continuous cost functions such that $c_k$ converges uniformly to $c$. Let {$\{\mu_k\}_{k \in \N},\{\nu_k\}_{k \in \N}$ be sequences in} $\mathcal P_1(\R)$	converging in $\mathcal W_1$ to $\mu$ and $\nu$, {respectively}. For each $k \in \N$ let $\pi^k\in \Pi_M(\mu_k,\nu_k)$ be {an optimizer of \eqref{MOT} with cost $c_k$ between the marginals $\mu_k$ and $\nu_k$.}
	If $c(x,y) \leq a(x) + b(y)$ with $a \in L^1(\mu)$, $b\in L^1(\nu)$, and
	\[
		{\limsup_{k \to \infty} \int_{\R\times\R} c_k(x,y) \,\pi^k(dx,dy)} < \infty,
	\]
	then any {weak} accumulation point of $\{\pi^k\}_{k\in\N}$  is an optimizer of \eqref{MOT} for the cost function $c$. In particular if the latter has a unique optimizer $\pi$, then $\pi^k\to \pi$ weakly.
\end{theorem}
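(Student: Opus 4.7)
The plan is to prove the theorem via a two-sided comparison of values: show that any weak accumulation point $\pi$ of $(\pi^k)_k$ is feasible for the limiting \eqref{MOT}, and that its cost equals the limit value $V(\mu,\nu,c):=\inf_{\pi'\in\Pi_M(\mu,\nu)}\int c\,d\pi'$. Convergence $\mu_k\to\mu$, $\nu_k\to\nu$ in $\mathcal W_1$ gives tightness of $(\pi^k)_k$, so along a subsequence $\pi^k\to\pi$ weakly. The marginals of $\pi$ are $\mu,\nu$ by continuity of the projections. For the martingale property, $\mathcal W_1$-convergence of $\nu_k$ gives uniform integrability of $y\mapsto y$ under $(\pi^k)_k$ in the second coordinate, so we may pass to the limit in $\int (y-x)\varphi(x)\,\pi^k(dx,dy)=0$ for $\varphi\in C_b(\R)$, yielding $\pi\in\Pi_M(\mu,\nu)$.

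Next comes the easier ``lower bound'' half of value convergence. Uniform convergence $c_k\to c$ together with the domination $c(x,y)\le a(x)+b(y)$ and the $\mathcal W_1$-based uniform integrability of $a(x)+b(y)$ under $(\pi^k)_k$ yield a Fatou/Vitali-style inequality $\int c\,d\pi\le \liminf_k\int c_k\,d\pi^k$. Since $\pi\in\Pi_M(\mu,\nu)$, this immediately gives $V(\mu,\nu,c)\le\liminf_k V(\mu_k,\nu_k,c_k)$. The whole problem then reduces to the converse inequality $\limsup_k V(\mu_k,\nu_k,c_k)\le V(\mu,\nu,c)$.

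This converse is the substantive step and the main obstacle. Given any $\hat\pi\in\Pi_M(\mu,\nu)$, one needs to construct martingale couplings $\hat\pi^k\in\Pi_M(\mu_k,\nu_k)$ of the perturbed marginals with $\int c_k\,d\hat\pi^k\to\int c\,d\hat\pi$. This is delicate because the martingale condition is a constraint on the conditional first moments of the disintegration $x\mapsto\hat\pi_x$, which is \emph{not} captured by the weak topology on $\mathcal P(\R\times\R)$ -- two couplings can be weakly close yet have very different kernels -- and because naive ``reshuffling'' of $\hat\pi$ to enforce the new marginals generally destroys the martingale property. My plan is to exploit precisely the finer adapted (weak) topology advertised in the introduction, which records the map $x\mapsto\hat\pi_x$ and under which (i) $\Pi_M(\mu_k,\nu_k)$ converges to $\Pi_M(\mu,\nu)$ in a Painlev\'e--Kuratowski sense, so the approximations $\hat\pi^k$ exist, and (ii) the functional $\pi\mapsto\int c\,d\pi$ is continuous for continuous $c$ with the given growth. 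Concretely, candidates for $\hat\pi^k$ could be produced via shadow/curtain-type perturbations aligned with $\mu_k,\nu_k$, or via finite-discretization of $\hat\pi_x$ followed by fitting the kernels to the new marginals.

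Combining the two inequalities gives $\int c\,d\pi=V(\mu,\nu,c)$, so every accumulation point is optimal for \eqref{MOT}. For the last assertion, uniqueness of the limiting optimizer together with tightness of $(\pi^k)_k$ upgrades subsequential weak convergence to full weak convergence. The hardest step is unquestionably the upper-semicontinuity of $V$: because MOT-optimality lacks a purely support-based (monotonicity) characterization, one cannot imitate the classical OT stability proof, and must instead transfer couplings across perturbed marginals while respecting the martingale constraint -- which is exactly what the adapted topology is engineered to make possible.
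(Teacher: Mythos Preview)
Your proposal follows a value-comparison route rather than the paper's monotonicity route, and the substantive step---upper semicontinuity of $(\mu,\nu)\mapsto V(\mu,\nu,c)$---is not actually proved. You correctly flag it as the main obstacle, but then offer only aspirations: ``candidates for $\hat\pi^k$ could be produced via shadow/curtain-type perturbations \ldots\ or via finite-discretization,'' and an appeal to the adapted topology ``under which $\Pi_M(\mu_k,\nu_k)$ converges to $\Pi_M(\mu,\nu)$ in a Painlev\'e--Kuratowski sense.'' None of this is established in the paper (nor is it easy): the adapted topology is used here to transport \emph{monotonicity}, not to prove Kuratowski convergence of the martingale polytopes, and the value-continuity statement you would need is precisely Corollary~1.2, which in the paper is \emph{derived from} Theorem~1.1 rather than the other way around. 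So as written your argument is circular with respect to the paper's logic, and without an independent construction of $\hat\pi^k\in\Pi_M(\mu_k,\nu_k)$ converging (in a sense strong enough to pass the cost to the limit) to an arbitrary $\hat\pi\in\Pi_M(\mu,\nu)$, the proof has a genuine gap.

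Your closing remark that ``MOT-optimality lacks a purely support-based (monotonicity) characterization'' is also slightly off the mark and explains why you did not find the paper's route. It is true that there is no characterization via $\supp\pi\subset\R\times\R$, but the paper's key observation is that there \emph{is} a monotonicity characterization once one lifts to $\R\times\mathcal P_1(\R)$: each optimizer $\pi^k$ is martingale $C_k$-monotone (equivalently $(C_k,\tilde{\mathcal F}_M)$-monotone), this notion is stable under weak limits via the closedness of the sets $\tilde\Gamma_N^\epsilon$ (Lemma~3.3 and Theorem~3.4), and martingale $C$-monotonicity together with the bound $c\le a+b$ is \emph{sufficient} for optimality by Griessler's result. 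This sidesteps entirely the need to build approximating martingale couplings of perturbed marginals. If you want to salvage the value-function approach, you would need an independent proof that every $\hat\pi\in\Pi_M(\mu,\nu)$ is an adapted-weak limit of some $\hat\pi^k\in\Pi_M(\mu_k,\nu_k)$; that is a nontrivial approximation theorem in its own right and is not available from the tools assembled here.
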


\begin{corollary}\label{Cor stab mot intro}
Let $c,c_k \colon \R \times \R \rightarrow [0, \infty)$, $k\in\N$, be  continuous cost functions such that $c_k$ converges uniformly to $c$. Let {$\{\mu_k\}_{k \in \N},\{\nu_k\}_{k \in \N}$ be sequences in} $\mathcal P_r(\R)$	converging in $\mathcal W_r$ to $\mu$ and $\nu$, {respectively,  and $\mu_k$ is smaller in convex order than $\nu_k$, $k \in \N$.}
Suppose that
\[ c(x,y)\leq { K\left(1+|x|^r+|y|^r\right)},\text{ for some }{K>0.}\]
Then we have
\[
	\lim_{k\to\infty}\,\inf_{\pi\in\Pi_M(\mu_k,\nu_k)} \int_{\R\times\R} c_k(x,y)\,\pi(dx,dy)=\inf_{\pi\in\Pi_M(\mu,\nu)} \int_{\R\times\R} c(x,y)\,\pi(dx,dy).
\]
\end{corollary}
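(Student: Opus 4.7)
The plan is to deduce the convergence of optimal values from Theorem \ref{Thm stab mot intro} via a standard ``every subsequence has a sub-subsequence'' argument. Write $V_k := \inf_{\pi\in\Pi_M(\mu_k,\nu_k)}\int c_k\,d\pi$ and $V := \inf_{\pi\in\Pi_M(\mu,\nu)}\int c\,d\pi$. First I would note that by Strassen's theorem each $\Pi_M(\mu_k,\nu_k)$ is nonempty, and since the marginals are fixed, $\Pi_M(\mu_k,\nu_k)$ is tight and closed under weak convergence (the martingale property survives in the limit because $|x|+|y|$ is $\pi$-uniformly integrable when the marginals have a finite first moment); together with the nonnegativity and continuity of $c_k$, this gives existence of an optimizer $\pi^k\in\Pi_M(\mu_k,\nu_k)$ for each $k$.

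Next I would establish a uniform bound on the optimal values. From $c \leq r(1+|x|+|y|)$ and the uniform convergence $c_k\to c$, we have $c_k \leq r(1+|x|+|y|)+\epsilon_k$ with $\epsilon_k\to 0$. Since $\mu_k\to\mu$ and $\nu_k\to\nu$ in $\mathcal W_1$, the first moments $\int|x|\,d\mu_k$ and $\int|y|\,d\nu_k$ are bounded, so $V_k = \int c_k\,d\pi^k \leq C$ for some constant $C$ independent of $k$.

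Then I would invoke Theorem \ref{Thm stab mot intro}. The hypotheses are met: writing $a(x) := r(1+|x|)$ and $b(y) := r(1+|y|)$ gives the required domination $c(x,y)\leq a(x)+b(y)$ with $a\in L^1(\mu)$, $b\in L^1(\nu)$; and $\liminf_k\int c_k\,d\pi^k \leq C <\infty$. Given any subsequence of $(\pi^k)$, tightness of the marginals (inherited from their $\mathcal W_1$-convergence) makes the couplings tight, so along a further subsequence $\pi^{k_j}\to\pi^*$ weakly. Theorem \ref{Thm stab mot intro} then ensures $\pi^*\in\Pi_M(\mu,\nu)$ is optimal for $c$, i.e.\ $\int c\,d\pi^* = V$.

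Finally I would upgrade the weak convergence of $\pi^{k_j}$ to convergence of the cost integrals. Using $\|c_{k_j}-c\|_\infty\to 0$ and the linear growth bound, it suffices that $1+|x|+|y|$ is uniformly integrable under $(\pi^{k_j})$; this is exactly the content of $\mathcal W_1$-convergence of the marginals. Hence $V_{k_j} = \int c_{k_j}\,d\pi^{k_j} \to \int c\,d\pi^* = V$, and since every subsequence of $(V_k)$ admits such a sub-subsequence, $V_k\to V$. The only subtle point in this plan is the interchange of weak convergence with the linearly-growing cost integral in Step~4–5; this is precisely where the stronger $\mathcal W_1$ hypothesis (rather than weak convergence) of the marginals is essential, both to justify the uniform integrability and to feed the correct assumptions into Theorem \ref{Thm stab mot intro}.
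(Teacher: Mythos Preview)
Your proposal is correct and follows essentially the same approach as the paper: both use Theorem~\ref{Thm stab mot intro} to identify weak accumulation points of $(\pi^k)$ as optimizers for the limiting problem, and both exploit $\mathcal W_1$-convergence of the marginals to pass to the limit in the (linearly growing) cost integrals. The paper organizes the argument as separate $\liminf$/$\limsup$ inequalities and phrases the uniform-integrability step as ``weak and $\mathcal W_1$ accumulation points coincide'' (via the $\ell^1$-metric $D(x,y)=|x|+|y|$), whereas you run a direct sub-subsequence argument; these are cosmetic differences only.
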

We remark that Juillet has obtained in \cite{Ju16} the stability of the left-curtain coupling and hence stability for martingale transport for specific costs. These results are recovered as particular cases of our main result.   

Guo and Ob{\l}{\'o}j in \cite{GuOb17} introduce and study the convergence of a computational method for martingale transport where the marginals are discretely approximated and the martingale constraint is allowed to fail with a vanishing error.\footnote{Note that the updated version \cite{GuOb19} (listed on arxiv.org on April 8th 2019) shows in Proposition 4.7  that the optimal value of \ref{MOT} is continuous w.r.t. $(\mu, \nu)\in \mathcal P_2(\R) \times \mathcal P_2(\R)$ provided that $\mathcal P_2(\R) \times \mathcal P_2(\R)$ is equipped with $\mathcal W_2$-convergence and $c$ is assumed to be Lipschitz continuous. 

The authors of the present article decided to post this article on arxiv.org 	concurrently to emphasize the independence of our work. We also note that the main focus of  \cite{GuOb17}/\cite{GuOb19} lies on the numerics of martingale transport. In contrast to Theorem \ref{Thm stab mot intro} the  proof of \cite[Proposition 4.7]{GuOb19} is based on the dual problem rather than stability of the optimal couplings.
}
In independent work Wiesel \cite{Wi19} proved stability of the value function of the martingale optimal transport problem in $1$ dimension by estimating the distance between an arbitrary coupling to its projection w.r.t.\ the adapted Wasserstein distance. For further details on the adapted Wasserstein distance we refer to \cite{BaBaBeEd19}.

\subsection{Stability of optimal weak transport}

Gozlan, Roberto, Samson and Tetali \cite{GoRoSaTe17} proposed the following non-linear generalization of \eqref{OT}: Given a cost function $C:X\times \mathcal P(Y)\to\mathbb R$ the optimal weak transport problem is
\begin{equation}\tag{WOT}\label{WOT}
	\inf_{\pi\in\Pi(\mu,\nu)} \int_X C(x,\pi_x) \, \mu(dx).
\end{equation}
 Observe that one may consider cost functions of the form
\begin{equation}
	\label{MOT subset WOT}
	C_M(x,p) := \begin{cases} \int_{\R^d} c(x,y) \, p(dy) & \int_{\R^d} y \, p(dy) = x,\\ +\infty &\text{else,} \end{cases}
\end{equation}
and in this way \eqref{MOT} is a special case of \eqref{WOT}. 

While the original motivation for \eqref{WOT} mainly stems from applications to geometric inequalities (cf.\ Marton \cite{Ma96concentration, Ma96contracting} and Talagrand \cite{Ta95, Ta96}), weak transport problems appear also in a number of further topics, including martingale transport \cite{AlCoJo17, AlBoCh18, BeJu17, BaBeHuKa17,BaBePa18}, the causal transport problem \cite{BaBeLiZa16,AcBaZa16}, and  stability in mathematical finance \cite{BaBaBeEd19}. In fact, recently some works have considered non-linear martingale transport problems for cost {functionals $C \colon \R \times \mathcal P_1(\R) \to \R$, i.e.\ where the cost can also depend on the kernel} as in \eqref{WOT}, cf.\ \cite{BaBeHuKa17,GuMeNu17,CoVi19}. We also refer to \cite{GoJu18,FaGoPr19} for some unexpected applications of weak transport problems in analysis and probability.


The second main contribution of the article is the stability of optimal weak transport. Throughout the article we fix a compatible metric on $Y$, and for a real number $r\geq 1$ we denote by $\mathcal P_r(Y)$ the set of probability measures which integrate the function $d_Y(y_0,\cdot)^r$ for some $y_0 \in Y$, where $d_Y$ denotes a metric on $Y$. We endow $\mathcal P_r(Y)$ with the  the $r$-Wasserstein topology denoted by $\mathcal W_r$, and denote the space of continuous functions on $X$ by $\mathcal C(X)$.

%
%
%
%

\begin{theorem}[WOT Stability]\label{Thm stab WOT intro}
	Let $C,C_k:X\times \mathcal P_r(Y)\to[0,\infty)$, $k\in\N$, be continuous cost functions such that
	$C_k$ converges uniformly to $C$, and either one of the following holds
	\begin{enumerate}[label=(\alph*)]
		\item $\{C(x,\cdot)\colon x\in X\}\subset \mathcal C(P_r(Y)) $ is an equicontinuous family of convex functions,
		\item $\mu\in\mathcal P_r(X)$, and there is a constant $K>0$, $x_0\in X$, $y_0\in Y$ such that \[ \textstyle C(x,p) \leq K\left(1 + d_X(x,x_0)^r + \int_Y d_Y(y,y_0)^r\, p(dy)\right).\]
	\end{enumerate}
	Let {$\{\mu_k\}_{k \in \N}$, $\{\nu_k\}_{k \in \N}$ be sequences in $\mathcal P(X)$ and $\mathcal P_r(Y)$}, which converge weakly to $\mu$ and in $\mathcal W_r$ to $\nu$, respectively. For each $k \in \N$ let $\pi^k\in\Pi(\mu_k,\nu_k)$ be an optimizer of \eqref{WOT} with cost function $C_k$ {between the marginals $\mu_k$ and $\nu_k$}. If
	\[ \limsup_{k \to \infty} \int_X C_k(x,\pi^k_x)\, \mu_k(dx)< \infty,\]
	then any {weak} accumulation point of $\{\pi^k\}_{k\in\N}$ is an optimizer of \eqref{WOT} for the cost function $C$. In particular if the latter has a unique optimizer $\pi$, then $\pi^k\to \pi$ weakly. 
\end{theorem}

We now describe the main idea used in the proofs of Theorems \ref{Thm stab mot intro} and \ref{Thm stab WOT intro}.


\subsection{Monotonicity and the correct topology on the set of couplings }\label{sec:correct topology}

The article \cite{BaBePa18} investigates the optimal weak transport problem by essentially enlarging the original state space $X\times Y$ to $X\times \mathcal P(Y)$. We briefly review this idea since it points to the right notion of monotonicity which will be useful in proving the above stability results.

First we introduce the embedding map
\begin{align}\label{def J}
\begin{split}
J\colon &\mathcal P(X\times Y) \rightarrow \mathcal P(X\times\mathcal P(Y)),\\
			&\pi \mapsto {\proj_X(\pi)(dx) \, \delta_{\pi_x}(dp)},
\end{split}
\end{align}
{in words, if $(X,Y) \sim \pi$ then $(X,\pi_X) \sim J(\pi)$,
and its left-inverse, the $X\times Y$-intensity map $\hat I$,
\begin{align} \label{def hat I}
	\begin{split}
	\hat I\colon &\mathcal P(X \times \mathcal P(Y)) \to \mathcal P(X \times Y),\\
	& P \mapsto \int_{p \in \mathcal P(Y)}p(dy) \, P(dx,dp).
	\end{split}
\end{align}
}

Despite the fact that {$J$} is seldom continuous, it does enjoy a key property: it preserves the relative compactness of a set. As a consequence, one can easily obtain optimizers for the following suitable extension of \eqref{WOT} as soon as $C$ is lower semicontinuous:
\begin{align}\label{WOT'}\tag{WOT'}
	\inf_{P\in\Lambda(\mu,\nu)} \int_{X \times \mathcal P(Y)} C(x,p) \, P(dx,dp),
\end{align}
where $\Lambda(\mu,\nu)$ is the set of couplings $P \in \mathcal P(X\times\mathcal P(Y))$ 
{with $\hat I(P) \in \Pi(\mu,\nu)$.} 
When $C(x,\cdot)$ is furthermore convex, the extended problem \eqref{WOT'} is equivalent to the original one, and in addition,  \eqref{WOT} can be shown to admit an optimizer by means of the natural projection operator {$\hat I$} from $P(X\times\mathcal P(Y))$ onto $\mathcal P(X\times Y)$.

This idea of using an embedding (which preserves relative compactness) into a larger space can be appreciated in the following terms: On the original space $\mathcal P(X\times Y)$ we consider the initial topology of $J$ when the target space is given the weak topology. This initial topology has been studied in \cite{BaBeEdPi17,BaBaBeEd19} and given the name \emph{adapted weak topology}. One immediate observation is that the cost functional of  \eqref{WOT} is lower semicontinuous in this topology if $C$ is likewise lower semicontinuous.

Since under the stated conditions it makes no difference to work with \eqref{WOT} or \eqref{WOT'}, it helps to consider the latter simpler linear problem in order to inform our intuition. By analogy with optimal transport we  define 

\begin{definition}[$C$-monotonicity]\label{intro def:C-monotonicity}
	A coupling $\pi\in\Pi(\mu,\nu)$ is $C$-monotone iff there exists a $\mu$-full set $\Gamma\subset X$ such that for any finite number of points $x_1,\ldots,x_N\in\Gamma$ and $q_1,\ldots,q_N\in\mathcal P(Y)$ with $\sum_{i=1}^N \pi_{x_i} = \sum_{i=1}^N q_i$, we have
	$$\sum_{i=1}^N C(x_i,\pi_{x_i}) \leq \sum_{i=1}^N C(x_i,q_i).$$
\end{definition}
 
 It was shown under mild assumptions in \cite{BaBePa18} that optimality of $\pi$ for \eqref{WOT} implies $C$-monotonicity in the sense of Definition~\ref{intro def:C-monotonicity} above. The reverse implication(sufficiency) was shown to be true under the additional assumption that the cost function {$C$} is uniformly $\mathcal W_1$-Lipschitz {in the second argument}. In the present article we will generalize this result (and largely simplify the arguments) in Theorem \ref{thm:WOT optimality}. Once we are equipped with this necessary and sufficient criterion for optimality, the stability result Theorem \ref{Thm stab WOT intro} becomes a consequence of the fact that the notion of $C$-monotonicity is itself stable.
 
Although martingale optimal transport is a particular case of optimal weak transport, in this work we treat the two problems separately. 
{The reason for this lies in the fact that our approach to optimal weak transport requires regularity of the cost, which the `embedded cost' $C_M$, see \eqref{MOT subset WOT}, does not provide any longer.}
{To deal with the singular cost $C_M$, we refine the notion of $C$-monotonicity when it comes to martingale couplings and additionally employ techniques  which currently only work in dimension one.}
We define
\begin{definition}[Martingale $C$-monotonicity]\label{intro def:mart C-monotonicity}
	A coupling $\Pi_M(\mu,\nu)$ is martingale $C$-mono\-tone iff there exists a $\mu$-full set $\Gamma\subset \R^d$ such that for any finite number of points $x_1,\ldots,x_N\in\Gamma$ and $q_1,\ldots,q_N\in\mathcal P_1(\R^d)$ with $\sum_{i=1}^N \pi_{x_i} = \sum_{i=1}^N q_i$ and $\int_{\R^d} y \, q_i(dy) = x_i$, we have
	\begin{align*}
		\sum_{i=1}^N C(x_i,\pi_{x_i}) \leq \sum_{i=1}^N C(x_i,q_i).
	\end{align*}
\end{definition}
Then the key to proving Theorem \ref{Thm stab mot intro} boils down to two arguments: that martingale $C$-monotonicity is sufficient for optimality, and that this notion of monitonicity is itself stable. 

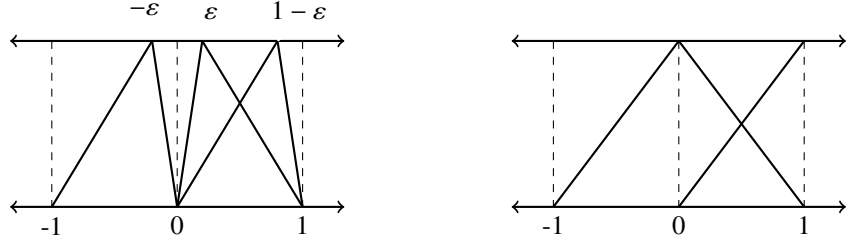
\begin{figure}
	\begin{tikzpicture}[scale=1.1]
	\draw[thick,<->] (-2,0)--(2,0) ;
	\draw[thick,<->] (-2,2)--(2,2);
	\draw[thick,-] (-0.3,2)--(-1.5,0) node[anchor=north ] {-1};
	\draw[thick,-] (-0.3,2)--(0,0);
	\draw[thick,-] (0.3,2)--(1.5,0);
	\draw[thick,-] (0.3,2)--(0,0);
	\draw[thick,-] (1.2,2)--(1.5,0);
	\draw[thick,white,-] (1.2,2)--(1.9,2.6) node[anchor=north east , black] {$1-\epsilon$};
	\draw[thick,white,-] (0.3,2.1)--(0.6,2.52) node[anchor=north east , black] {$\epsilon$};
	\draw[thick,white,-] (-0.3,2.1)--(-0.1,2.6) node[anchor=north east , black] {$-\epsilon$};
	\draw[thick,-] (1.2,2)--(0,0);
	\draw[dashed,-] (0,2)--(0,0) node[anchor=north ] {0};
	\draw[dashed,-] (1.5,2)--(1.5,0) node[anchor=north ] {1};
	\draw[dashed,-] (-1.5,2)--(-1.5,0);
	\draw[thick,<->] (4,0)--(8,0) ;
	\draw[thick,<->] (4,2)--(8,2);
	\draw[dashed,-] (6,2)--(6,0) node[anchor=north ] {0};
	\draw[dashed,-] (7.5,2)--(7.5,0) node[anchor=north ] {1};
	\draw[dashed,-] (4.5,2)--(4.5,0) node[anchor=north ] {-1};
	\draw[thick,-] (6,2)--(4.5,0) ;
	\draw[thick,-] (6,2)--(7.5,0) ;
	\draw[thick,-] (7.5,2)--(6,0) ;
	\end{tikzpicture}
	\caption{Left: Six possibe paths of the left-curtain coupling between a uniform distribution on the interval $[-1,1]$ and the measure $1/4(\delta_{-1}+\delta_{1}+ 2\delta_{0}) $, c.f.\ \cite[Example 2.11]{Ju16}.
	Right: Three paths contained in the support of this coupling (as can be seen from the left picture after sending $\epsilon\to 0$). These violate the ``left-curtain'' condition.}
	\label{figure}
\end{figure}

{The idea that a monotonicity viewpoint for martingale optimal transport has to be based on the disintegrations of a coupling, or put otherwise, that optimality cannot be read-off from the support of a martingale coupling, goes to Juillet \cite{Ju16}. This author in fact showed that for the left-curtain coupling (c.f.\ \cite{BeJu16}) the support of an optimizer may contain sub-optimal paths; see Figure \ref{figure} for an illustration. This is in stark contrast with classical optimal transport, where for say continuous bounded cost functions $c$ it is known that $c$-cyclical monotonicity of the support is equivalent to optimality. This failure ultimately underlies the need for our Definition \ref{intro def:mart C-monotonicity}. }

\subsection{Outline} The article is written so that the sections concerning optimal weak transport and martingale optimal transport are largely independent. Section \ref{sec:wot} deals with the stability of optimal weak transport, whereas Section \ref{sec:mot} explores the stability of martingale optimal transport. Section \ref{sec:wotot} studies the relationship between classical optimal transport and optimal weak transport. Finally Section  \ref{sec:epi} wraps up the article with a number of remarks, {followed by an appendix containing auxiliary results}.

\section{On the weak transport problem}\label{sec:wot}

Let us complement Definition \ref{intro def:C-monotonicity} with a more complete list of monotonicity properties:
{see Remark \ref{rem:cycl monotonicity} \ref{rem:cycl monotonicity 1} for a comparision of these definitions.}

\begin{definition}\label{def:C-monotonicity}~
\begin{enumerate}
	\item\label{it:CM2}  We call $\Gamma\subset X\times \mathcal P(Y)$ $C$-monotone iff for any finite number of points $$(x_1,p_1),\ldots,(x_N,p_N) \in \Gamma\text{  and }q_1,\ldots,q_N\in\mathcal P(Y)\text{ with }\sum_{i=1}^N p_i = \sum_{i=1}^N q_i,$$ we have
	$$\sum_{i=1}^N C(x_i,p_i) \leq \sum_{i=1}^N C(x_i,q_i).$$
	\item\label{it:CM3} A probability measure $P \in \mathcal P(X\times \mathcal P(Y))$, which is concentrated on a $C$-monotone set, is called $C$-monotone.
\end{enumerate}	
\end{definition}
{ The next theorem greatly extends the sufficiency of $C$-monotonicity first obtained in \cite[Theorem 5.5]{BaBePa18}.} We recall that $\mathcal P_r(Y)$ denotes the space of probability measures on $Y$ with finite $r$-th moment, { $r\geq 1$,} i.e., $p \in \mathcal P_r(Y)$ iff $p\in \mathcal P(Y)$ and
\[
	\int_Y d_Y(y,y_0)^r \, p(dy) < \infty
\]
for some $y_0 \in Y$, and thus all $y_0 \in Y$. It is well-known that $\mathcal P_r(Y)$ equipped with the topology of the Wasserstein metric $\mathcal W_r$ becomes a Polish space, where $\mathcal W_r(p,q)$ for $p,q\in\mathcal P_r(Y)$ is defined via
\[
	\mathcal W_r(p,q)^{r} = \inf_{\pi \in \Pi(p,q)} \int_{Y\times Y} d_Y(y_1,y_2)^r \, \pi(dy_1,dy_2).
\]
{Furthermore, we fix on $X \times \mathcal P_r(Y)$ the metric $d((x,p),(x',p'))^r := d_X(x,x')^r + \mathcal W_r(p,p')^r$ for $(x,p),(x',p') \in X \times \mathcal P_r(Y)$, and denote by $\mathcal P_r(X \times \mathcal P_r(Y))$ the space of probability measures on $X \times \mathcal P_r(Y)$, which finitely integrate $(x,p) \mapsto d((x,p),(x',p'))^r$ for given $(x',p') \in X \times \mathcal P_r(Y)$, equiped with the $\mathcal W_r$-metric associated with the metric $d$.
Similarly, we fix on $\mathcal P_r(Y)$ the Wasserstein metric $\mathcal W_r$, and denote by $\mathcal P_r(\mathcal P_r(Y))$, the space of probability measures on $ \mathcal P_r(Y)$, which finitely integrate $p \mapsto \mathcal W_r(p,p')^r$ for given $p' \in \mathcal P_r(Y)$, equiped with the $r$-th Wasserstein topology.
Note that these definitions are independent of the choice of $(x',p') \in X \times \mathcal P_r(Y)$ and $p' \in \mathcal P_r(Y)$, respectively.
}

Recall the definition of the set $\Lambda(\mu,\nu)$ given after \eqref{WOT'}, i.e.,
\begin{equation}\label{eq:def Lambda}
	\Lambda(\mu,\nu) := \left\{P \in \mathcal P(X\times \mathcal P(Y))\colon \hat I(P) \in \Pi(\mu,\nu)\right\}.
\end{equation}


\begin{theorem}\label{thm:WOT optimality}
	Let $\mu\in\mathcal P(X)$, $\nu\in\mathcal P_r(Y)$, $C\colon X\times \mathcal P_r(Y)\rightarrow \R$ measurable. Assume either of the following conditions:
	\begin{itemize}
	\item[(a)] {$\left\{ C(x,\cdot) \colon x \in X \right\} \subset \mathcal C(\mathcal P_r(Y))$ is equicontinuous continuous,} i.e.\
	{
		\[	\theta(\delta) := \sup\left\{ |C(x,p) - C(x,q)| \colon x \in X, (p,q) \in \mathcal P_r(Y)^2 \text{ s.t.} \mathcal W_r(p,q)^r \leq \delta \right\},	\]
	vanishes for $\delta \searrow 0$.
	}

\item[(b)] $\mu \in \mathcal P_r(X)$, $C$ is jointly continuous and there is $K\in\mathbb R,$ $x_0\in X$, $y_0\in Y$ s.t.\ for all $x\in X$: 
\[\textstyle C(x,p)\leq K\left(1+d_X(x,x_0)^r + \int d_Y(y,y_0)^r \, p(dy)\right ).\]
\end{itemize}	 
	Then $P \in \Lambda(\mu,\nu)$ is optimal for \eqref{WOT'} iff $P$ is $C$-monotone. Similarly, if $p\mapsto C(x,p)$ is convex, then $\pi\in\Pi(\mu,\nu)$ is optimal for \eqref{WOT} iff $\pi$ is $C$-monotone.
\end{theorem}
{
\begin{remark}
	Note that under the assumptions of Theorem \ref{thm:WOT optimality} $C$-monotonicity of $\pi \in \Pi(\mu,\nu)$ yields $C$-monotonicity of $J(\pi) \in \Lambda(\mu,\nu)$, thus, $J(\pi)$ is optimal for \eqref{WOT'} and particularly $\pi$ is optimal for \eqref{WOT}. {We will complete this discussion in Remark \ref{rem:cycl monotonicity}. }

	At the same time, we see in Section \ref{sec:epi} Example \ref{ex:1} that convexity of $p \mapsto C(x,p)$ is necessary for optimal couplings being $C$-monotone. Example \ref{ex:2} shows that additional regularity (to lower semicontinuity and boundedness) of the cost $C$ is required for $C$-monotonicity being a sufficient optimality criterion.
\end{remark}}

\begin{proof}
	Let $P$ be $C$-monotone concentrated on the $C$-monotone set $\Gamma$. Fix $P'\in \Lambda(\mu,\nu)$. We argue as in \cite{Be12} for classical (linear) optimal transport. Take any iid sequences $(X_n)_{n\in\N}$ of $X$-valued random variables, and any iid sequences  $(Y_n)_{n\in\N}$, $(Z_n)_{n\in\N}$ of $\mathcal P(Y)$-valued random variables, on some probability space $(\Omega,\mathbb{P})$, with
	 \[(X_n,Y_n)\sim P,\quad (X_n,Z_n)\sim P'.\]
	 In particular by the law of large numbers, we find $\mathbb P$-almost surely
	 \begin{multline}
		\label{eq:lln}
		 \int_{X \times \mathcal P_r(Y)} C(x,p) P'(dx,dp) - \int_{X \times \mathcal P_r(Y)} C(x,p) P(dx,dp)
		 \\ =  \lim_{N \to \infty} \frac{1}{N} \sum_{n=1}^N C(X_n,Z_n) - C(X_n,Y_n).
	 \end{multline}
	 Note that for any function $g \in \mathcal C(Y)$, which is majorized by $y\mapsto 1 + d_Y(y,y_0)^r$, we have
	 \begin{align*}
	 	\mathbb{E}[Y_n(g)] = \nu(g),
	 \end{align*}
	 {where $p(g)$ for $p \in \mathcal P(Y)$ denotes the integral $\int_Y g(y) \, p(dy)$.}
	 Then the law of large numbers implies almost surely
	 \[\lim_{N \to \infty} \frac{1}{N}\sum_{n=1}^N Y_n(g) = \nu(g) = \lim_{N \to \infty} \frac{1}{N}\sum_{n=1}^N Z_n(g).\]
	 By standard separability arguments, we find $\mathbb{P}$-almost surely
	 \begin{equation}
		 \label{eq:marginals converge}
		 \lim_{N\to \infty} \frac{1}{N}\sum_{n=1}^N Y_n = \nu = \lim_{N \to \infty} \frac{1}{N}\sum_{n=1}^N Z_n\quad\mathbb{P}\text{-a.s.}
	 \end{equation}
	 where convergence holds in $\mathcal W_r$. Let $\omega \in \Omega$ be in a $\mathbb P$-full set s.t.
	 \[\lim_{N\to \infty} \mathcal W_r\left(\frac{1}{N}\sum_{n=1}^N Y_n(\omega),\frac{1}{N}\sum_{n=1}^N Z_n(\omega)\right) = 0,\]
	 and $(X_n,Y_n)(\omega) \in \Gamma$ for all $n\in\N$.
From now on we omit the $\omega$ argument. For each $N\in\N$, we denote a $\mathcal W_r$-optimal coupling in $\Pi(\frac{1}{N}\sum_{n=1}^N Z_n,\frac{1}{N}\sum_{n=1}^N Y_n)$ by $\chi^N$, { i.e.\
\[
	 \mathcal W_r\left(\frac{1}{N} \sum_{n = 1}^N Z_n, \frac{1}{N} \sum_{n = 1}^N Y_n \right)^r = \int_{Y \times Y} d_Y(z,y)^r \, \chi^N(dz,dy).
\]
}
We denote by $\{\chi^N_z \}_{z\in Y}$ a regular disintegration of $\chi^N$ given its projection in the first coordinate (marginal). Defining $\chi^N Z_n (dy):=  \int_{z\in Y} Z_n(dz)\,\chi^N_z(dy)$, we find
	\begin{align}\label{eq:WOT opt1}
	\frac{1}{N} \sum_{n=1}^N \mathcal W_r(Z_n, \chi^N Z_n)^r \leq \frac{1}{N} \sum_{n=1}^N \int_{Y^2} d_Y(z,y)^r \chi^N_z(dy) Z_n(dz) = \mathcal W_r\left({\frac{1}{N}\sum_{n=1}^N Z_n, \frac{1}{N}\sum_{n=1}^N Y_n}\right)^r.
\end{align}
	 Moreover, $\sum_{n=1}^N \chi^N Z_n = \sum_{n=1}^N Y_n$, so by $C$-monotonicity
	 \begin{align}
	\label{eq:WOT opt2}
	 \frac{1}{N} \sum_{n=1}^N C(X_n,\chi^N Z_n) - C(X_n,Y_n) \geq 0.
	 \end{align}
	 In the case of $(a)$, {we apply Lemma \ref{lem:mod_cont_wasser} and get $\tilde \theta\colon \R^+ \rightarrow \R^+$, which is continuous, concave, and vanishing at $0$, }{and which we rename $\theta$ for simplicity.}
	 Hence, by Jensen's inequality,
	 \[
	 	\frac{1}{N}\sum_{n=1}^N \theta\left(\mathcal W_r(Z_n,\chi^N Z_n)^r\right) \leq 
	 	\theta\left(\frac{1}{N}\sum_{n=1}^N \mathcal W_r(Z_n,\chi^N Z_n)^r \right) \leq \theta\left( \mathcal W_r\left(\frac{1}{N}\sum_{n = 1}^N Z_n, \frac{1}{n} \sum_{n = 1}^N Y_n\right)^r\right),
	 \]
	 which vanishes as $N\to +\infty$. Using $C$-monotonicity of $P$ and uniform continuity, we obtain $\mathbb P$-almost surely
	 \begin{multline}
		\label{eq:WOT  opt3}
			\frac{1}{N} \sum_{n=1}^N C(X_n,Z_n) - C(X_n,Y_n) 
		\\	= \frac{1}{N} \sum_{n=1}^N C(X_n,Z_n) - C(X_n,\chi^N Z_n) + \frac{1}{N} \sum_{n=1}^N C(X_n,\chi^N Z_n) - C(X_n,Y_n)
		\\	\geq -\frac{1}{N} \sum_{n=1}^N \theta(\mathcal W_r(Z_n,\chi^NZ_n))
		\\	\geq -\theta\left(\Bigg(\mathcal W_r\Bigg(\frac{1}{N}\sum_{n=1}^N Y_n(\omega),\frac{1}{N}\sum_{n=1}^N Z_n(\omega)\Bigg)\right)\rightarrow 0.
	 \end{multline}
	{In the case of $(b)$, {we define the random variable $P_N'$} taking values in $X\times \mathcal P(Y)$ by
	\[ P_N' := \frac{1}{N} \sum_{n=1}^N \delta_{X_n, \chi^N Z_n}. \]
	By \eqref{eq:WOT opt1} we find $\mathbb{P}$-almost surely 
	\[
		\mathcal W_r(P_N', P')^r \leq \frac{1}{N}\sum_{n=1}^N \mathcal W_r(Z_n,\chi^N Z_n) \leq \mathcal W_r\left(\frac{1}{N} \sum_{i=1}^N Z_n, \frac{1}{N} \sum_{n=1}^N Y_n\right)^r,
	\]
	the right-hand side of which converges a.s.\ to zero as we have already seen {in \eqref{eq:marginals converge}}.\footnote{Here, the Wasserstein distance on the left-hand side of the equation is taken on the space $X \times \mathcal P_r(Y)$ w.r.t. the metric $d((x_1,p_1),(x_2,p_2))^r = d_X(x_1,x_2)^r + \mathcal W_r(p_1,p_2)^r$.} Then, continuity and growth of $C$ yields
	\begin{align}
		\label{eq:WOT opt4}
		\int_{X \times \mathcal P_r(Y)} C(x,p) \, P'(dx,dp) - \int_{X \times \mathcal P_r(Y)} C(x,p) \, P_N'(dx,dp) \rightarrow 0.
	\end{align}	
Hence, in both cases we have $\mathbb{P}$-almost surely}
	\begin{multline*}
		\int_{X \times \mathcal P_r(Y)} C(x,p) \, P'(dx,dp) - \int_{X \times \mathcal P_r(Y)} C(x,p) \, P(dx,dp) \\ \geq \liminf_{N \to \infty} \frac{1}{N} \sum_{n=1}^N C(X_n,Z_n) - C(X_n,Y_n) \geq 0,
	\end{multline*}
	{where we used \eqref{eq:lln},\eqref{eq:WOT opt2}, \eqref{eq:WOT  opt3}, and \eqref{eq:WOT opt4}.}
\end{proof}

\subsection{Stability of \texorpdfstring{$C$}{}-Monotonicity}

Recall the embedding of \eqref{def J}
\begin{align*}
	J\colon &\mathcal P(X\times Y) \rightarrow \mathcal P(X\times\mathcal P(Y)),\\
			&\pi \mapsto {\proj_X(\pi)(dx) \, \delta_{\pi_x}(dp).}
\end{align*}
The intensity $I(Q)$ of some measure $Q\in\mathcal P(\mathcal P(Y))$ is uniquely defined as the probability measure $I(Q) \in \mathcal P(Y)$ with
\[ I(Q)(f) = \int_{\mathcal P(Y)} \int_Y f(y) p(dy) \, Q(dp)\quad \forall f \in \mathcal C_b(Y), \]
{that is $I(Q)(dy) = \int_{p \in \mathcal P(Y)} p(dy) \, Q(dp)$.}

\begin{remark}\label{rem:cycl monotonicity} In the light of this embedding it appears to be natural to consider $C$-mono\-tonicity on the enhanced space $X\times \mathcal P(Y)$.
\begin{enumerate}[label = (\alph*)]
\item \label{rem:cycl monotonicity 1} {$\pi\in \Pi(\mu,\nu)$ is $C$-monotone iff $J(\pi)$ is $C$-monotone:
On the one hand, if $\pi$ is $C$-monotone it is possible to find a measurable set $\tilde \Gamma \subseteq X$ such that $\mu(\tilde \Gamma) = 1$ which defines via
\[\Gamma = \left\{ (x,p) \in \tilde \Gamma \times \mathcal P(Y) \colon p = \pi_x  \right\},\]
a $C$-monotone set. Therefore, equivalently to Definition \ref{def:C-monotonicity}, we can demand that there exists a $C$-monotone set $\Gamma\subset X\times \mathcal P(Y)$ such that $(x,\pi_x)\in \Gamma$ for $\mu$-almost every $x\in X$.

On the other hand, if $J(\pi)$ is $C$-montone, then there exists a $C$-montone set $\Gamma$ with
\[
	1 = J(\pi)(\Gamma) = \mu\left( \left\{ x \in X \colon (x,\pi_x) \in \Gamma \right\}\right).
\]
Consider the $\mu$-full, analytically measurable set $\{ x \in X \colon (x,\pi_x) \in \Gamma\}$.
As analyitcally measurable sets are universally measurable, it admits a Borel measurable subset $\tilde \Gamma$
with $\mu(\tilde \Gamma) = 1$.
Thus, $\pi$ is $C$-monotone (on $\tilde \Gamma$) in the sense of Definition \ref{intro def:C-monotonicity}.}
\item \label{rem:cycl monotonicity 2} If $\Gamma \subset X \times \mathcal P(Y)$ is $C$-monotone, and $C\colon X\times \mathcal P(Y) \rightarrow \R$ is convex in the second argument, i.e., {for all $x \in X,~ (p,q) \in \mathcal P(Y)^2,$ and $\alpha \in [0,1]$}
{$$C(x,\alpha p + (1- \alpha) q) \leq \alpha C(x,p) + (1 - \alpha) C(x,q),$$}
then the enlarged set
$$\tilde \Gamma :=  \left\{ \Bigg(x, \frac{1}{k} \sum_{i=1}^k p_i\Bigg)  \colon x \in X,~ (x,p_i) \in \Gamma, ~ i=1,\ldots, k \in \N\right\}$$
is also $C$-monotone. { Likewise,} if $C(x,\cdot)$ is further continuous for all $x\in X$, then
$$\hat \Gamma := \left\{(x,p) \in X \times \mathcal P(Y)\colon x \in X, p \in \overline{\text{co}(\Gamma_x)} \right\},$$
is $C$-monotone, where  $\overline{\text{co}}$ stands for the closed convex hull  {and $\Gamma_x$ denotes the $x$-fibre of $\Gamma$, that is $\{p \in \mathcal P(Y)\colon (x,p) \in \Gamma \}$.}
\item \label{rem:Lambda Claus constraints}
{ We observe that the set $\Lambda(\mu,\nu)$ (see \eqref{eq:def Lambda})} can be characterized by a family of continuous functions $\mathcal F\subset \mathcal C(X\times \mathcal P_r(Y))$: $P \in \Lambda(\mu,\nu)$ if and only if
\begin{align*}
	\int_{X\times\mathcal P(Y)} f(x) \, P(dx,dp) =  \int_X f(x) \, \mu(dx)&\quad \forall f \in \mathcal C_b(X),\\
	\int_{X\times\mathcal P(Y)} \int_Y g(y) \, p(dy) \, P(dx,dp) = \int_Y g(y) \, \nu(dy)&\quad \forall g \in \mathcal C_b(Y).
\end{align*}
As a further observation we have the equivalence of $C$-monotonicity as in Definition~\ref{def:C-monotonicity} and $C$-finite optimality under the linear constraints $\mathcal F$ {
\begin{align*}
	\mathcal F = \Big\{f \in \mathcal C_b(X\times \mathcal P(Y)) \colon &\exists  g \in \mathcal C_b(X),~h\in \mathcal C_b(Y)\\
	&\textstyle\text{ s.t. }f(x,p) \equiv g(x) \text{ or } f(x,p) = \int_Y h(y)\,p(dy) \Big\},
\end{align*} 
which was introduced in \cite[Definition 1.2]{BeGr14}.}
\end{enumerate}	
\end{remark}

\begin{theorem}\label{thm:WOT monotonicity}
	Let $C\colon X \times \mathcal P_r(Y)\to {[0,\infty]}$ be measurable and $P^*\in\mathcal P_r(X\times \mathcal P_r(Y))$ optimal for \eqref{WOT'} with finite value. Then $P^*$ is $C$-monotone. In particular, if $C$ satisfies for all $x\in X$ and $Q \in \mathcal P(\mathcal P(Y))$
	\begin{align}\label{eq:integralconvexity}
		C(x,I(Q)) \leq \int_{\mathcal P(Y)} C(x,p) \, Q(dp),
	\end{align}
	then any optimizer $\pi^*$ of \eqref{WOT} with finite value is $C$-monotone. 
\end{theorem}

\begin{proof}
	The first assertion is a consequence of Remark~\ref{rem:cycl monotonicity}.\ref{rem:Lambda Claus constraints} and \cite[Theorem 1.4]{BeGr14}.
	To show the second assertion, let $P\in \Lambda(\mu,\nu)$. Then $I(P_x) \, \mu(dx) \in \Pi(\mu,\nu)$ and by \eqref{eq:integralconvexity}
	\begin{align*}
		\int_{X\times \mathcal P(Y)} C(x,p)\, P(dx,dp) \geq \int_{X} C(x,I(P_x)) \, \mu(dx).
	\end{align*}
	Hence, $J(\pi^*)$ is optimal for \eqref{WOT'}. 
	{We deduce from the first part of the proof combined with Remark \ref{rem:cycl monotonicity} \ref{rem:cycl monotonicity 1} $C$-monotonicity of $\pi$.}
\end{proof}

The assumption that $C$ is lower bounded is as a matter of fact not necessary to deduce $C$-monotonicity in the classical optimal weak transport setting, cf. \cite[Theorem~5.2]{BaBePa18}. Note that \eqref{eq:integralconvexity} holds when $C(x,\cdot)$ is lower semicontinuous and convex. { Indeed, by similar approximation arguments as in Theorem~\ref{thm:WOT optimality} we find for any $Q \in \mathcal P(\mathcal P(Y))$ a sequence of measures $\{p_k\}_{k\in\N}\subset \mathcal P(Y)$ such that
\[\frac{1}{N}\sum_{i=1}^N p_i \rightarrow I(Q)\quad \text{in }\mathcal W_r, \quad \frac{1}{N}\sum_{i=1}^N C(x,p_i) \rightarrow \int_{X \times \mathcal P(Y)} C(x,p) \, Q(dp).\]
Thus,
\begin{align*}
	\int_{X \times \mathcal P(Y)} C(x,p) Q(dp) &= \lim_{N \to \infty} \frac{1}{N} \sum_{i=1}^N C(x,p_i) \\ &\geq \liminf_{N \to \infty} C\left(x,\frac{1}{N}\sum_{i=1}^Np_i\right)\geq C\left(x,I(Q)\right).
\end{align*}
}

\begin{lemma}\label{lem:approximative sequence}
	Let $p_i, m_i \in \mathcal P_r(Y)$, $i = 1,\ldots, N$, with $\sum_{i=1}^N p_i = \sum_{i=1}^N m_i$, and $\{p_1^k,\ldots,p_N^k\}$, $k\in\N$, be sequences {in} $\mathcal P_r(Y)$ with $p_i^k \to p_i \text{ in } \mathcal W_r$.
	Then there exist approximative sequences $\{m_1^k,\ldots,m_N^k\}_{k \in \N}$ of competitors, i.e.,
	$$ \sum_{i=1}^N p_i^k = \sum_{i=1}^N m_i^k \text{ and } m_i^k \rightarrow m_i \text{ in }\mathcal W_r.$$
\end{lemma}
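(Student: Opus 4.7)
The plan is to construct $m_i^k$ by redistributing the mass of the common reference measure $\mu := \sum_{j=1}^N m_j = \sum_{j=1}^N p_j$ along a $\mathcal{W}_r$-optimal transport plan from $\mu$ to $\mu^k := \sum_{j=1}^N p_j^k$, the two being viewed as finite measures of total mass $N$. As a preparatory observation, summing the $\mathcal{W}_r$-optimal couplings of each pair $(p_j, p_j^k)$ produces a coupling of $\mu$ and $\mu^k$ whose $r$-cost is bounded by $\sum_{j=1}^N \mathcal{W}_r(p_j, p_j^k)^r$, and hence $\mu^k \to \mu$ in $\mathcal{W}_r$.

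Since $m_i \leq \mu$ as measures, the Radon--Nikodym derivatives $\alpha_i := dm_i/d\mu$ exist, take values in $[0,1]$, and satisfy $\sum_{i=1}^N \alpha_i = 1$ $\mu$-a.e. I will then fix a $\mathcal{W}_r$-optimal transport plan $\pi^k$ from $\mu$ to $\mu^k$, disintegrate it as $\pi^k(dy, dz) = \mu(dy)\,\pi^k_y(dz)$ with each $\pi^k_y \in \mathcal{P}(Y)$ (which is permissible since $Y$ is Polish), and set
\[
m_i^k(dz) := \int_Y \alpha_i(y)\,\pi^k_y(dz)\,\mu(dy).
\]

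The verifications are direct. Summing in $i$ and using $\sum_i \alpha_i = 1$ $\mu$-a.e.\ gives $\sum_{i=1}^N m_i^k = \int \pi^k_y(\cdot)\,\mu(dy) = \mu^k = \sum_{i=1}^N p_i^k$. Each $m_i^k$ is a probability measure, because $m_i^k(Y) = \int \alpha_i \, d\mu = m_i(Y) = 1$, and it lies in $\mathcal{P}_r(Y)$ since $m_i^k \leq \mu^k$ and $\mu^k$ integrates $d(\cdot, y_0)^r$. For the convergence $m_i^k \to m_i$ in $\mathcal{W}_r$, the measure
\[
\gamma_i^k(dy, dz) := \alpha_i(y)\,\mu(dy)\,\pi^k_y(dz)
\]
has first marginal $\alpha_i \mu = m_i$ and second marginal $m_i^k$, hence $\gamma_i^k \in \Pi(m_i, m_i^k)$; using $0 \leq \alpha_i \leq 1$,
\[
\mathcal{W}_r(m_i, m_i^k)^r \leq \int d(y, z)^r\,\gamma_i^k(dy, dz) \leq \int d(y, z)^r\,\pi^k(dy, dz) = \mathcal{W}_r(\mu, \mu^k)^r \to 0.
\]

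I do not foresee a serious obstacle: the construction is an adaptation of the familiar glue-and-disintegrate technique from optimal transport. The only point requiring mild care is the bookkeeping with the non-probability measures $\mu$ and $\mu^k$ of total mass $N$, which is harmless since Wasserstein distances and disintegrations transform trivially under uniform rescaling of mass.
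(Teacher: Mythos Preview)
Your proof is correct. The underlying mechanism---push the pieces of each $m_i$ forward through the disintegration kernel of a $\mathcal W_r$-optimal transport plan---coincides with the paper's, but the implementation differs. The paper first fixes a discrete ``matrix'' decomposition $m_{i,j}\le p_i\wedge m_j$ with $\sum_j m_{i,j}=p_i$ and $\sum_i m_{i,j}=m_j$, and then transports each piece $m_{i,j}$ via the kernel of the $\mathcal W_r$-optimal coupling of $p_i$ with $p_i^k$; you instead work with the densities $\alpha_i=dm_i/d\mu$ relative to the aggregate $\mu=\sum_j p_j$ and transport everything through a single $\mathcal W_r$-optimal coupling of $\mu$ with $\mu^k=\sum_j p_j^k$. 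Your version is a bit more streamlined (no auxiliary $m_{i,j}$ are needed), while the paper's per-coordinate couplings keep track of which portion of each $m_j$ originated from which $p_i$ and yield the finer estimate $\sum_j \mathcal W_r(m_{i,j}^k,m_{i,j})^r\le \mathcal W_r(p_i,p_i^k)^r$; this extra bookkeeping is exactly what gets reused in the martingale analogue (Lemma~\ref{lem:martingale approximative sequence}), where one must also correct barycentres piece by piece.
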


\begin{proof}
	Since $\sum_{i=1}^N m_i = \sum_{i=1}^N p_i$, we find sub-probability measures $m_{i,j}${, with the property that $m_{i,j}(A)\leq\min\{p_i(A),m_j(A)\}$ for all $A$ measurable, and}
	\begin{align*}
		m_j = \sum_{i=1}^N m_{i,j},\quad p_i = \sum_{j=1}^N m_{i,j}.
	\end{align*}	
	Denote by $(\chi^{k,i}_z)_{z\in Y}$ a regular disintegration of a $\mathcal W_r$-optimal transport plan $\chi^{k,i} \in \Pi(p_i,p_i^k)$ w.r.t. to its first marginal $p_i$. Let $i,j \in \{1,\ldots, N\}$ and define
	\begin{align*}
		m_{i,j}^k(dy) := \int_{ z \in Y} \chi^{k,i}_z(dy) \, m_{i,j}(dz),\quad m_i^k := \sum_{j=1}^N m_{j,i}^k.
	\end{align*}
	Since
	\begin{multline*}
		\sum_{j = 1}^N \mathcal W_r(m_{i,j}^k,m_{i,j})^r\leq \sum_{j = 1}^N \int_{Y \times Y}  d_Y(z,y)^r \, \chi^{k,i}_z(dy) \, m_{i,j}(dz)
	\\	\leq \int_{Y \times Y}  d_Y(z,y)^r\, \chi^{k,i}_z(dy) \, p_i(dz)=\mathcal W_r(p_i^k,p_i)^r,
	\end{multline*}
we deduce the convergence of $m_{i,j}^k$ to $m_{i,j}$, and in consequence, the convergence of $m_i^k$ to $m_i$ in $\mathcal W_r$. Finally observe that 
\[
	\sum_{i = 1}^N m_i^k=\sum_{i = 1}^N \sum_{j = 1}^N m_{j,i}^k= { \sum_{j=1}^N \int_{z \in Y} \chi^{k,j}_z(dy) \, p_j(dz)=\sum_{j=1}^N p_j^k}
\]
so indeed {$\{m_1^k,\ldots,m_N^k\}$ are feasible competitors of $\{p_1^k,\ldots,p_N^k\}$ such
that for $i = 1,\ldots,N$, $m_i^k \to m_i$ in $\mathcal W_r$.}
\end{proof}

\begin{lemma}\label{lem:closed Gamma}
	Let $C \in \mathcal C(X\times \mathcal P_r(Y))$, $\epsilon \geq 0$, and $N\in\N$. Then the set
	\begin{align}\label{def:GammaNepsilon}
	\begin{split}
		\Gamma_N^\epsilon := \Bigg\{(x_i,p_i)_{i=1}^N \in (X\times\mathcal P_r(Y))^N \Big|& \forall m_1,\ldots, m_N \in\mathcal P_r(Y) \text{ s.t. } \sum_{i=1}^N p_i = \sum_{i=1}^N m_i,\\
		&\text{ we have } \sum_{i=1}^N C(x_i,p_i) \leq \sum_{i=1}^N C(x_i,m_i) + \epsilon \Bigg\}
	\end{split}
	\end{align}
	is a closed subset of $(X\times \mathcal P_r(Y))^N$.
\end{lemma}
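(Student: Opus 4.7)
The plan is to verify closedness sequentially: pick any sequence $(x_i^k,p_i^k)_{i=1}^N \in \Gamma_N^\epsilon$ converging to $(x_i,p_i)_{i=1}^N$ in $(X\times \mathcal P_r(Y))^N$, and show that the limit still satisfies the defining inequality of $\Gamma_N^\epsilon$.

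To that end, fix an arbitrary competitor $(m_i)_{i=1}^N$ with $\sum_{i=1}^N p_i = \sum_{i=1}^N m_i$. The obvious move—apply the defining inequality at stage $k$ to the fixed $(m_i)_{i=1}^N$—fails, because in general $\sum_{i=1}^N p_i^k \neq \sum_{i=1}^N m_i$, so $(m_i)_{i=1}^N$ is not a legal competitor for $(p_i^k)_{i=1}^N$. This mismatch is the one subtle point of the argument, and Lemma~\ref{lem:approximative sequence} has been set up precisely to resolve it.

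Indeed, I would apply that lemma to produce, for each $k$, a sequence $(m_i^k)_{i=1}^N$ in $\mathcal P_r(Y)$ with $\sum_{i=1}^N p_i^k = \sum_{i=1}^N m_i^k$ and $m_i^k \to m_i$ in $\mathcal W_r$ as $k\to\infty$. Since $(x_i^k,p_i^k)_{i=1}^N\in \Gamma_N^\epsilon$, the admissibility of $(m_i^k)_{i=1}^N$ gives
\begin{align*}
\sum_{i=1}^N C(x_i^k,p_i^k) \leq \sum_{i=1}^N C(x_i^k,m_i^k) + \epsilon.
\end{align*}

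Finally, I would pass to the limit $k\to\infty$ on both sides. By assumption $C\in\mathcal C(X\times\mathcal P_r(Y))$, so continuity together with $(x_i^k,p_i^k)\to(x_i,p_i)$ and $(x_i^k,m_i^k)\to(x_i,m_i)$ in $X\times\mathcal P_r(Y)$ yields
\begin{align*}
\sum_{i=1}^N C(x_i,p_i) \leq \sum_{i=1}^N C(x_i,m_i) + \epsilon.
\end{align*}
Since $(m_i)_{i=1}^N$ was an arbitrary competitor of $(p_i)_{i=1}^N$, this shows $(x_i,p_i)_{i=1}^N \in \Gamma_N^\epsilon$, completing the proof. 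No step is really hard; the only conceptual obstacle is recognizing that the competitor constraint $\sum p_i = \sum m_i$ is not preserved under approximation, which is exactly what Lemma~\ref{lem:approximative sequence} is designed to fix.
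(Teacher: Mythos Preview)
Your proposal is correct and follows essentially the same approach as the paper's proof: take a convergent sequence in $\Gamma_N^\epsilon$, fix an arbitrary competitor for the limit, invoke Lemma~\ref{lem:approximative sequence} to produce approximating competitors at each stage $k$, and pass to the limit using continuity of $C$. The paper's own proof is a two-sentence compression of exactly this argument.
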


\begin{proof}
	Take any convergent sequence $(x_i^k,p_i^k)_{i=1}^N \in \Gamma_N^\epsilon$, $k\in\N$, such that
	$$ x_i^k \rightarrow x_i \text{ in } X,\quad p_i^k \rightarrow p_i \text{ in }\mathcal W_r.$$
	Assume that $(x_i,m_i)_{i=1}^N$ is a competitor, i.e., $\sum_{i=1}^N p_i = \sum_{i=1}^N m_i$.
	Lemma~\ref{lem:approximative sequence} provides an approximative sequence of competitors, and by continuity of $C$ we conclude.
\end{proof}

The key ingredient towards stability of \eqref{WOT} is the following result concerning stability of the notion of $C$-monotonicity.


\begin{theorem}\label{thm:stability WOT}
	Let $C,C_k\in \mathcal C(X\times \mathcal P_r(Y))$, $k\in\N$, and $C_k$ converges uniformly to $C$. If $P,P^k\in \mathcal P_r(X\times\mathcal P_r(Y))$, $k\in\N$, such that
	\begin{enumerate}[label=(\alph*)]
		\item for all $k\in\N$ the measure $P^k$ is $C_k$-monotone,
		\item the sequence $(P^k)_{k\in\N}$ converges to $P$,
	\end{enumerate}
then $P$ is $C$-monotone. Moreover, if $\pi,\pi^k\in {\mathcal P_r}(X\times Y)$ and $C_k$ is convex in the second argument, $k\in\N$, such that
	\begin{enumerate}[label=(\alph*')]
		\item for all $k\in\N$ the measure $\pi^k$ is $C_k$-monotone,
		\item the sequence $(\pi^k)_{k\in\N}$ converges to $\pi$,
	\end{enumerate}
	then $\pi$ is $C$-monotone.
\end{theorem}

\begin{proof}
	The aim is to construct a $C$-monotone set $\Gamma$ on which $P$ is concentrated. So, we write $P^{k,\otimes N}$ and $P^{\otimes N}$ for the $N$-fold product measure of $P^k$ and $P$ where $N\in\N$. By $C_k$-monotonicity and uniform convergence we find for any $\epsilon > 0$ a natural number $k_0$ such that $P^{k,\otimes N}$, $k\geq k_0$, is concentrated on $\Gamma^\epsilon_N$, see \eqref{def:GammaNepsilon}. Lemma~\ref{lem:closed Gamma} combined with the Portmanteau theorem yield that $P^{\otimes N}$ is concentrated on $\Gamma_N^\epsilon$:
	\begin{align*}
	1 = \limsup_k P^{k,\otimes N}(\Gamma^\epsilon_N) \leq P^{\otimes N}(\Gamma^\epsilon_N) = 1.
	\end{align*}
	As a consequence, we find that $P^{\otimes N}$ gives full measure to the closed set $\Gamma_N := \Gamma^0_N$. 
	{Sets of the form $\bigotimes_{i=1}^N O_i$ where $O_i$ is open in $X\times \mathcal P_r(Y)$
	form a basis of the product topology on $(X \times \mathcal P_r(Y))^N$.}
	Hence, we can cover the open set $\Gamma_N^c$ by countably many sets of the form $\bigotimes_{i=1}^N O_i$ where $O_i$ is open in $X\times \mathcal P_r(Y)$,
	\begin{align*}
		\Gamma_N^c = \bigcup_{k\in \N} \bigotimes_{i=1}^N O_{i,k}.
	\end{align*}
	In particular, we deduce for any $k\in\N$
	\begin{align*}
		0 = P^{\otimes N}\left(\bigotimes_{i=1}^N O_{i,k}\right) = \prod_{i=1}^N P(O_{i,k}).
	\end{align*}
	We find open sets $A_N$ such that
	\begin{gather*}
		A_N := \bigcup_{\substack{k \in \N, i \in \{1,\ldots,N\} \\ P(O_{i,k}) = 0}} O_{i,k},\quad P(A_N) = 0,\\ 
		\Gamma_N^c \subset \bigcup_{i=1}^N \left(X \times \mathcal P_r(Y) \right)^{i-1} \times A_N \times \left( X \times \mathcal P_r(Y) \right)^{N-i}.
	\end{gather*}
	Since $N\in\N$ was arbitrary we define the closed and $C$-monotone set
	\begin{align*}
		\Gamma := \bigg(\bigcup_{N\in\N} A_N\bigg)^c,\quad P(\Gamma) = 1,\quad \Gamma^N \subset \Gamma_N.
	\end{align*}
	With the taken precautions it poses no challenge to verify that $P$ is $C$-monotone on $\Gamma$. 
	
	To show the second assertion, we embed $\pi^k\in\mathcal P(X\times Y)$ into $\mathcal P(X\times \mathcal P(Y))$ owing to the map $J$. {Note that $\Lambda(\mu,\nu)$ is a closed subset of $\mathcal P_r(X \times \mathcal P_r(Y))$. By the same line of argument as in \cite[Lemma 2.6]{BaBePa18} we find relative compactness, thus compactness of $\Lambda(\mu,\nu)$.}	
	{ Therefore,} we find an accumulation point $P\in \mathcal P(X\times \mathcal P(Y))$ of $(J(\pi^k))_{k\in\N}$. Note that
	\[
		\mu(dx) \, I(P_x) =: \pi \in \Pi(\mu,\nu),
	\]
	determines a coupling, which is likewise an accumulation point of $\{ \pi^k\}_{k\in\N}$. Since $P$ is concentrated on the $C$-monotone set $\Gamma$, we find for any $x\in X$ such that $P_x(\Gamma_x) = 1$ a sequence of measures $p_i^x \in \Gamma_x \subset \mathcal P(Y),~i\in\N$, with
	$$q_n^x := \frac{1}{n}\sum_{i=1}^n p_i^x \rightarrow I(P_x) = \pi_x,\quad n\rightarrow \infty,\quad  \text{ in }\mathcal W_r.$$
	By Remark~\ref{rem:cycl monotonicity} \ref{rem:cycl monotonicity 2} we know that $(x,q_n^x)$ is contained in the $C$-monotone set $\Gamma$. By closure of $\Gamma$ we conclude $(x,\pi_x) \in \Gamma$ for $\mu$-a.e. $x$, and $C$-monotonicity of $\pi$.
\end{proof}

From Theorems \ref{thm:WOT optimality} and \ref{thm:stability WOT} we easily deduce the following corollary, which has Theorem \ref{Thm stab WOT intro} in the introduction as a particular case:


\begin{corollary}
	Let $C,C_k\in \mathcal C(X\times \mathcal P_r(Y))$, $k\in\N$, be non-negative cost functions such that $C_k$ converges uniformly to $C$ and one of the following holds:
	\begin{enumerate}[label=(\alph*)]
		\item {$\{ C(x,\cdot) \colon x \in X \}$ is equicontinuous},
		\item $\mu \in \mathcal P_r(X)$ and there is a constant $K>0$, $x_0\in X$, $y_0\in Y$ such that for all $(x,p) \in X \times \mathcal P(X)$ $$\textstyle C(x,p) \leq K\left(1+d_X(x,x_0)^r + \int_Y d_Y(y,y_0)^r \, p(dy)\right).$$
	\end{enumerate}
	Let $\{ \mu_k \}_{k \in \N}$ and $\{ \nu_k\}_{k \in \N}$ be two sequences of probability measures on $\mathcal P(X)$ and $\mathcal P_r(Y)$, respectively, where $\mu_k$ converges weakly to $\mu$, and $\nu_k$ converges in $\mathcal W_r$ to $\nu$. Let $P^k\in\Lambda(\mu_k,\nu_k)$ be optimizers of \eqref{WOT'} with cost function $C_k$ {between the marginals $\mu_k$ and $\nu_k$}. If
	$$\limsup_k P^k(C_k) < \infty,$$
	then any {weak }accumulation point of $\{P^k\}_{k \in \N}$ is an optimizer of \eqref{WOT'} for the cost $C$.
	
	If moreover $C_k(x,\cdot)$ and $C(x,\cdot)$ are convex, then an analogous statement holds in the case of \eqref{WOT}.
\end{corollary}


\section{Stability of martingale optimal transport}\label{sec:mot}
In this section we consider the martingale optimal transport problem~\eqref{MOT}, and $X = Y = \R^d$. A generalization of $c$-cyclical monotonicity under additional linear constraints were suggested by \cite{BeGr14,Za14}, which also encompass \eqref{MOT}. {For \eqref{MOT}},
the set of linear constraints $\mathcal F_M \subset \mathcal C(\R^d\times \R^d)$ takes the shape
\begin{align*}
	\mathcal F_M := \left\{ f \in \mathcal C(\R^d \times \R^d)\colon f(x,y) = g(x)(y-x) \text{ and } g \in \mathcal C_b(\R^d)  \right\}.
\end{align*}

\begin{definition}\label{def:martingale monotonicity}~
\begin{enumerate}
	\item\label{it:MM1} A measures $\alpha' \in \mathcal P(\R^d\times\R^d)$ is called a $\mathcal F_M$-competitor of $\alpha \in \mathcal P(\R^d\times\R^d)$ iff their marginals coincide and $\alpha(f) = \alpha'(f)$ for all $f\in\mathcal F_M$.
	\item\label{it:MM2} We call $\Gamma\subset \R^d\times \R^d$ $(c,\mathcal F_M)$-monotone iff for any probability measure $\alpha$, finitely supported on $\Gamma$, and any competitor $\alpha'$, we have $\alpha(c) \leq \alpha'(c)$.
	\item\label{it:MM3} A martingale coupling $\pi \in \Pi_M$, which is supported on a $(c,\mathcal F_M)$-monotone set, is called $(c,\mathcal F_M)$-monotone.
\end{enumerate}	
\end{definition}

\begin{remark}\label{rem:martingale monotonicity}
	Definition~\ref{def:martingale monotonicity}, Point \eqref{it:MM2}, implies for any $(c,\mathcal F_M)$-monotone set $\Gamma$ that for all sequences $(x_1,p_1),\ldots,(x_N,p_N)\in \mathcal \R^d \times \mathcal P_1(\R^d)$ with $p_i$ finitely supported on the fibre $\Gamma_{x_i}$ that
	\begin{align}\label{eq:rem mm}
		\sum_{i=1}^N \int_{\R^d} c(x_i,y) \, p_i(dy) \leq \sum_{i=1}^N \int_{\R^d} c(x_i,y) \, q_i(dy),
	\end{align}
	where $q_1,\ldots,q_N\in \mathcal P_1(\R^d)$, $\sum_{i=1}^N q_i = \sum_{i=1}^N p_i$ and $\int_{\R^d} y \, p_i(dy) = \int_{\R^d} y \, q_i(dy)$. 
	
	On the other hand, suppose $\Gamma$ is a set such that \eqref{eq:rem mm} holds for all $N\in\N$, and collections $(x_1,p_1),\ldots,(x_N,p_N)\in \R^d \times \mathcal P_1(\R^d)$ with $p_i$ finitely supported on $\Gamma_{x_i}$.
	Given any measure $\alpha \in \mathcal P(\Gamma)$ supported on the finite set $\{(x_1,y_1),\ldots,(x_n,y_n)\}$, we have optimality of $\{(x_1,\alpha_{x_1}),\ldots,(x_n,\alpha_{x_n})\}$ where $\alpha_{x_i}(dy) = \frac{\alpha(x_i,dy)}{\alpha(x_i\times \R^d)}$ over all competing sequences as in \eqref{eq:rem mm}. Therefore, $\tilde \alpha(dx,dy) := \sum_{i=1}^n \delta_{x_i}(dx) \, \alpha_{x_i}(dy)$ defines an optimal coupling between its marginals under all competitors, i.e., for all $\gamma \in \Pi(\proj_1(\tilde \alpha),\proj_2(\tilde \alpha))$ with $\int_{\R^d} y \gamma_x(dy) = \int_{\R^d} y \, \alpha_x(dy)$ we have
	\begin{align} \label{eq:rem mm2}
		\frac{1}{n} \sum_{i=1}^n c(x_i,y_i) = \tilde \alpha(c) \leq \gamma(c).
	\end{align}
	By the duality theorem of linear programming, we find dual optimizers of the linear program given by \eqref{eq:rem mm2}, i.e., maximizers of
	\begin{align*}
		\max_{f(x)+g(y)+\Delta(x)\cdot(x-y) \leq c(x,y)} \sum_{i=1}^N f(x_i) + \sum_{i=1}^N \alpha_{x_i}(g) + \sum_{i=1}^N \Delta(x_i) \cdot \int_{\R^d} (x_i-y)\alpha_{x_i}(dy).
	\end{align*}
	The complementary  slackness condition, which reads here as
	$$(f(x_i) + g(y_j) + \Delta(x_i)\cdot (x_i - y_j)) \tilde \alpha(x_i,y_j) = 0\quad \forall 1\leq i,j\leq n,$$
	 yields that optimality of the dual optimizer is independent of the definitive choice of the measure $\alpha$ -- as long as $\supp \alpha \subset \supp \tilde \alpha$.  Hence, we deduce {$(c,\mathcal F_M)$-monotonicity} of $\alpha$, and $\Gamma$ is $(c,\mathcal F_M)$-monotone. In Definition~\ref{def:weak martingale monotonicity} we introduce a notion of martingale $C$-monotonicity for weak transport costs which by above reasoning naturally extends $(c,\mathcal F_M)$-monotonicity, see Definition \ref{def:martingale monotonicity}, to weak transport costs.
\end{remark}

We will see in Lemma \ref{lem:c,mart mon equiv C,mart mon} that under given conditions $(c,\mathcal F_M)$-monotonicity of a coupling is equivalent to martingale $C$-monotonicity (cf.\ Definition \ref{intro def:mart C-monotonicity}).

	By \cite{BeGr14}, optimizers of \eqref{MOT} are concentrated on $(c,\mathcal F_M)$-monotone sets. If $c$ is continuous, then the reverse implication was shown in one dimension by Beiglb\"ock and Juillet \cite{BeJu16} and Griessler \cite{Gr16}, but for arbitrary dimensions $d\in\N$ it remains unanswered. 

Let us regard two natural generalizations of \eqref{MOT}:
\begin{gather}\tag{MWOT}\label{MWOT}
	\inf_{\pi \in \Pi_M(\mu,\nu)} \int_{\R} C(x,\pi_x) \, \mu(dx),\\
\tag{MWOT'}\label{MWOT'}	
	\inf_{P \in \Lambda_M(\mu,\nu)} \int_{\R \times \mathcal P(\R)} C(x,p) \, P(dx,dp),	
\end{gather}
where $\Lambda_M(\mu,\nu)$ is the set of all $P\in \Lambda(\mu,\nu)$ giving full measure to 
$$\left\{(x,p) \in \R^d\times \mathcal P_1(\R^d) \colon x = \int_{\R^d} y \, p(dy)\right\},$$
 i.e., $P\in\Lambda_M(\mu,\nu)$ iff $P\in\Lambda(\mu,\nu)$ and
\begin{align*}
	\int_{\R^d \times \mathcal P(\R^d)} f(x,p) \, P(dx,dp) = 0\quad \forall f \in \tilde{\mathcal F}_M,
\end{align*}
where the set of martingale constraints $\tilde{\mathcal F}_M$ is given by
\begin{align*}
	\tilde{\mathcal F}_M := \Bigg\{f\in \mathcal C_b(\R^d\times \mathcal P_1(\R^d)) \colon   & \exists g \in \mathcal C_b(\mathcal P(\R^d)), h \in \mathcal C_b(\R^d)\\
	&\text{ s.t. } f(x,p) = g(p)h(x)\int_{\R^d}(x-y) \, p(dy) \Bigg\}.
\end{align*}

\begin{definition}\label{def:weak martingale monotonicity}~
\begin{enumerate}
	\item\label{it:wMM2} We call $\Gamma\subset \R^d\times \mathcal P_1(\R^d)$ martingale $C$-monotone iff for any $N\in\N$, any collection $(x_1,p_1),\ldots,(x_N,p_N)\in \Gamma$, and $q_1,\ldots,q_N\in \mathcal P_1(\R^d)$ such that $\sum_{i=1}^Np_i = \sum_{i=1}^N q_i$ and $\int_{\R^d} y \, p_i(dy) = \int_{\R^d} y \, q_i(dy)$, we have
	$$\sum_{i=1}^N C(x_i,p_i) \leq \sum_{i=1}^N C(x_i,q_i).$$
	\item\label{it:wMM3} A probability measure $P\in \mathcal P(\R^d\times\mathcal P_1(\R^d))$, which is supported on a martingale $C$-monotone set, is then called martingale $C$-monotone.
	\item\label{it:wMM4} A probability measure $\pi\in \mathcal P(\R^d\times\R^d)$ is called martingale $C$-monotone if $J(\pi)\in \mathcal P(\R^d\times\mathcal P_1(\R^d))$ is martingale $C$-monotone. (This is equivalent to Definition \ref{intro def:mart C-monotonicity}.) 
\end{enumerate}	
\end{definition}

Again, by \cite[Theorem 1.4]{BeGr14} we {show in the following theorem }that martingale $C$-monotonicity is a necessary optimality criterion.

\begin{theorem}\label{thm:MWOT monotonicity}
	Let $C\colon X \times \mathcal P_r(\R^d)\rightarrow {[0,\infty]}$ be measurable and $P^*\in \Lambda_M(\mu,\nu)$ optimal for \eqref{MWOT'} with finite value. Then $P^*$ is martingale $C$-monotone. { Moreover, if $C$ additionally satisfies} for all $x\in X$ and $Q \in \mathcal P_r(\mathcal P_r(\R^d))$
	\begin{align}\label{eq:integralconvexity2}
		C(x,I(Q)) \leq \int_{\mathcal P(\R^d)} C(x,p) \, Q(dp),
	\end{align}
	then any optimizer $\pi^*$ of \eqref{MWOT} with finite value is martingale $C$-monotone.
\end{theorem}

As before \eqref{eq:integralconvexity2} holds when $C(x,\cdot)$ is lower semicontinuous and convex, and in particular for $C(x,p)=\int_{\R^d} c(x,y) \, p(dy)$ {when $c\colon \R^d \times \R^d \to \R$ is lower semicontinuous and lower bounded.}

\begin{proof}
	Since \eqref{MWOT'} is an optimal transport problem under additional linear constraints, the first statement is a consequence of \cite[Theorem 1.4]{BeGr14}.
	To show the second assertion, we note that any martingale coupling $\pi\in \Pi_M(\mu,\nu)$ naturally induces an element in $\Lambda_M(\mu,\nu)$ by the embedding $J$, c.f. Section~\ref{sec:correct topology}. Let $P\in \Lambda_M(\mu,\nu)$, then $I(P_x)\,\mu(dx) \in \Pi_M(\mu,\nu)$ and by \eqref{eq:integralconvexity2}
	\begin{align*}
		\int_{\R^d \times \mathcal P(\R^d)} C(x,p)\,P(dx,dp) \geq \int_{\R^d \times \mathcal P(\R^d)} C(x,I(P_x))\,\mu(dx).
	\end{align*}
	Hence, $J(\pi^*)$ is optimal for \eqref{MWOT'}, {and we deduce from the first part} martingale $C$-monotonicity {of $J(\pi^\ast)$. Due to similar reasoning as in Remark \ref{rem:cycl monotonicity} \ref{rem:cycl monotonicity 1} we conclude that $\pi^\ast$ is also martingale $C$-montone}.
\end{proof}

From here on we assume that
$$d=1,$$
{but we hope that in the future a similar approach can be developed for higher dimensions.}

\begin{lemma}\label{lem:martingale approximative sequence}
	Let $N\in\N$ and $p_i \in \mathcal P_r(\R)$ with competitor $q_i \in \mathcal P_r(\R)$, $i = 1,\ldots,N$, i.e., 
	\[ \sum_{i=1}^N p_i = \sum_{i=1}^N q_i,\quad \int_\R y \, p_i(dy) = \int_\R y \, q_i(dy). \]
	{ Suppose there are sequences $\{p_1^k,\ldots,p_N^k\}$, $k \in \N$, of measures in $\mathcal P_r(\R)$ with $p_i^k \to p_i$ in $\mathcal W_r$.
	Then there exist approximative sequences $\{q_1^k,\ldots,q_N^k\}_{k \in \N}$ of competitors, i.e.,
	\[ \sum_{i = 1}^N p_i^k = \sum_{i = 1}^N q_i^k, \, {     \int_\R y \, p_i^k(dy) = \int_\R y \, q_i^k(dy)},\text{ and } q_i^k \to q_i\text{ in }\mathcal W_r.\]
	}
\end{lemma}

Since the proof of Lemma~\ref{lem:martingale approximative sequence} is slightly demanding, we first give for convenience of the reader a more concrete version of the argument in the simpler setting of $N=2$:

\begin{proof}[Proof of Lemma~\ref{lem:martingale approximative sequence} for $N=2$]
W.l.o.g. $q_1 \neq p_1$. Applying Lemma~\ref{lem:approximative sequence} we find a sequence $\{q_i^k\}_{k \in \N}$ which converges to $q_i$ in $\mathcal W_r$ {and such that $\sum_{i=1}^N q_i^k= \sum_{i=1}^N p_i^k$}.
	We may further assume $\int_\R y \, q_1^k(dy) < \int_\R y \, p_1^k(dy)$.
	We can decompose the measures $q_1,q_2,p_1,p_2$ into sub-probability measures $m_{i,j}$, $i,j\in\{1,2\}$ such that
	$$p_i = m_{i,1} + m_{i,2},\quad q_j = m_{1,j} + m_{2,j}.$$
	By equality of the mean values of $q_1$ and $p_1$, we find that
	$$\int_\R y \, m_{1,2}(dy) = \int_\R y \, m_{2,1}(dy).$$
	Thus, we find disjoint, open intervals $I_1,I_2$ with $\min(m_{1,2}(I_1),m_{2,1}(I_2)) > 0$ and $\sup(I_2) < \inf(I_1)$. Similarly, for each $k \in \N$ we can decompose $q_1^k,q_2^k,p_1^k,p_2^k$ in the same manner and obtain by the construction in Lemma~\ref{lem:approximative sequence} that $m_{i,j}^k$ converges to $m_{i,j}$ in $\mathcal W_r$. {When well-defined} denote by $\alpha_k > 0$ the constant such that
	\[\int_\R y\, q_1^k(dy) + \alpha_k \left(\frac{1}{m_{1,2}^k(I_1)} \int_{I_1} y \, m^k_{1,2}(dy) - \frac{1}{m^k_{2,1}(I_2)} \int_{I_2} y \, m^k_{2,1}(dy) \right)= \int_\R y \, p^k_1(dy).
	\]
	By $\mathcal W_r$-convergence, we have on the one hand $\liminf_k m_{1,2}^k(I_1) > 0$ and $\liminf_k m_{2,1}^k(I_2) > 0$, and on the other,
	$$\lim_{k \to \infty} \int_\R y q_1^k(dy) - \int_\R y p_1^k(dy) = 0,$$
	implying that {$\alpha_k$ is well-defined for $k$ sufficiently large, and} $\alpha_k\to 0$. 	Therefore, there is an index $k_0\in\N$ such that 
	\[\textstyle
	\tilde q_1^k = q_1^k + \alpha_k \left( \frac{\left.m^k_{1,2}\right|_{I_1}}{m^k_{1,2}(I_1)} - \frac{\left.m^k_{2,1}\right|_{I_2}}{m^k_{2,1}(I_2)} \right),\quad
	\tilde q_2^k = q_2^k - \alpha_k \left( \frac{\left.m^k_{1,2}\right|_{I_1}}{m^k_{1,2}(I_1)} - \frac{\left.m^k_{2,1}\right|_{I_2}}{m^k_{2,1}(I_2)} \right),
	\]
	are both probability measures for $k\geq k_0$, and $\{\tilde q_1^k,\tilde q_2^k\}_{k\geq k_0}$ has the desired properties.
\end{proof}

\begin{proof}[Proof of Lemma~\ref{lem:martingale approximative sequence} General Case]
	Let $s \in \R$, $F_p\colon\R\to[0,1]$ the cumulative distribution function of $p\in\mathcal P(\R)$, and define 
	$$I_s^1 := \{i \in \{1,\ldots,N\}\colon F_{p_i}(s) = 1\},\quad I_s^2 := \{ i \in \{1,\ldots,N\}\colon F_{q_i}(s) = 1\}.$$

	{ As a preparatory step, we show that
	\begin{equation}
		\label{eq:irred1}
		j \in \{1,\ldots,N\}\setminus I_s^1	\implies p_j((-\infty,s)) = 0,
	\end{equation}		
	already implies that $I_s^2 = I_s^1$ and
	\begin{equation}
		\label{eq:irred2}
		j \in \{1,\ldots,N\}\setminus I_2^1	\implies q_j((-\infty,s)) = 0.		
	\end{equation}
	This is achieved by observing the barycenters: assume \eqref{eq:irred1} and note that
	\begin{multline}
		\label{eq:irred3}
		\textstyle
		0 = \sum_{i \in I_s^1} \int_\R y \, p_i(dy) -  \int_\R y \, q_i(dy)
		\\ \textstyle = \int_\R y \, \left(  \sum_{i \in I_s^1} p_i - q_i \right)^+(dy) - \int_\R y \, \left(  \sum_{i \in I_s^1} q_i - p_i \right)^+(dy),
	\end{multline}
	where $(\cdot)^+$ of a signed measure denotes its positive part. Moreover, as $\sum_{i = 1}^N p_i = \sum_{i = 1}^N q_i$ we obtain
	\[
		\sum_{i \in I_s^1} q_i \vert_{(-\infty,s)} \leq \sum_{i = 1}^N p_i\vert_{(-\infty,s)} \leq \sum_{i \in I_s^1} p_i,
	\]
	and consequently $(\sum_{i \in I_s^1} p_i - q_i)^+$ is concentrated on $(-\infty,s]$, whereas $(\sum_{i \in I_s^1} q_i - p_i)^+$ is concentrated on $[s,+\infty)$.
	From \eqref{eq:irred3} follows that $\left(\sum_{i \in I_s^1} p_i - q_i\right)^+ = \left(\sum_{i \in I_s^1} q_i - p_i\right)^+=0$, thus $\sum_{i \in I_s^1} p_i = \sum_{i \in I_s^1} q_i$ and $I_s^1 \subseteq I_s^2$.
	Assume that there is $j \in I_s^2 \setminus I_s^1$.
	Then
	\[
		\sum_{i = 1}^N q_i \vert_{(-\infty,s)} = \sum_{i = 1}^N p_i\vert_{(-\infty,s)} = \sum_{i \in I_s^1} p_i\vert_{(-\infty,s)} = \sum_{i \in I_s^1} q_i\vert_{(-\infty,s)}
	\]
	shows that $q_j((-\infty,s)) = 0$. Since $j \in I_s^2$ we have that $q_j((s,+\infty)) = 0$, and 
	consequently $q_j = \delta_s$. The barycenter of $p_j$ coincides with $s$, but from $j \notin I_s^1$
	we deduce $p_j((s,+\infty)) > 0$ and $p_j((-\infty,s)) > 0$, which violates \eqref{eq:irred1}.	
	Therefore $I_s^1 = I_s^2$, and \eqref{eq:irred2} is satisfied.

	Clearly, if there exists $s \in \R$ such that either \eqref{eq:irred1} or \eqref{eq:irred2} holds, then by the preparatory step we get $I_s^1=I_s^2=:I_s$ and
	$\sum_{i \in I_s} p_i = \sum_{i \in I_s} q_i,$ and 
	\[
		j \in \{1,\ldots,N\}\setminus I_s \implies p_j((-\infty,s)) = 0,\quad j\in \{1\ldots,N\}\setminus I_s \implies q_j((-\infty,s)) = 0.
	\]
	In this case we can split the problem into two parts: Finding sequences of competitors for the index sets $I_s$ and $\{1,\ldots,N\} \setminus I_s$. It is sufficient to show the existence of such a sequence for the sub problem $I_s$, where we also assume that $s$ is minimal.
		
	Thus, assume without loss of generality that $s$ is minimal and $I_s^1 = I_s^2 = \{1,\ldots,N\}$, $N > 1$.
	Denote the convex hull of the support of $q_i$ by
	\[ S_i := \text{co}(\supp q_i)\quad i = 1,\ldots,N. \]	
	We can also assume without loss of generality that $(p_i,q_j)$ are pairwise different for $i,j \in \{1,\ldots,N\}$.

	Let $i_{\min},i_{\max} \in \{1,\ldots,N\}$ be such that $\inf S_{i_{\min}}$ is minimal and $\sup S_{i_{\max}}$ is maximal. Under these assumptions, we get that $p_{i_{\min}},q_{i_{\min}}, p_{i_{\max}}$, and $q_{i_{\max}}$ cannot be concentrated on a single point, thus,
	\begin{equation}
		\label{eq:minmax pos measure}
		\lambda(S_{i_{\min}}) > 0 \text{ and }\lambda(S_{i_{\max}}) > 0,
	\end{equation}
where $\lambda$ is the Lebesgue measure.

	Applying now Lemma~\ref{lem:approximative sequence} we find for every $i \in \{1,\ldots,N\}$ a sequence $\{q_i^k\}_{k \in \N}$ with $\sum_{i = 1}^N q_i^k = \sum_{i = 1}^N p_i^k$ and which converges to $q_i$ in $\mathcal W_r$, in particular,
	\begin{equation}
		\label{eq:barys converge}
		\lim_{k \to \infty} \left|\int_\R y p_i^k(dy) - \int_\R y q_i^k(dy)\right| = 0.
	\end{equation}

	Consider the following case. Let $i,j \in \{1,\ldots,N\}$ such that one of the following is true: either
	\begin{equation}
		\label{eq:cases}
		\lambda(S_j \cap S_i)>0 \quad \text{or}\quad S_j \subset \text{int}(S_i),
	\end{equation}
	where $\text{int}(S_i)$ denotes the interior of $S_i$. 
	Then there are open intervals $O_{i,j}^+, O_{i,j}^-, O_{j,i}^+, O_{j,i}^-$ such that
	\begin{gather*}
		q_i(O_{i,j}^t) > 0,\quad q_j(O_{j,i}^t) > 0,\quad t \in \{-,+\},\\
		\sup O_{i,j}^+ < \inf O_{j,i}^- ,\quad \sup O_{j,i}^+ < \inf O_{i,j}^-.
	\end{gather*}
	By weak convergence of $q_i^k$ to $q_i$, the Portmanteau theorem implies 
	\begin{align}
		\label{eq:portmanteau mass}
		\begin{split}
		\liminf_{k \to \infty} q_i^k(O_{i,j}^+) > 0, &\quad \liminf_{k\to \infty} q_i^k(O_{i,j}^-) > 0, \\
		\liminf_{k \to \infty} q_j^k(O_{j,i}^+) > 0, &\quad \liminf_{k\to \infty} q_j^k(O_{j,i}^-) > 0.
		\end{split}
	\end{align}		
	In particular, when $k$ is sufficiently large we can by \eqref{eq:barys converge} and \eqref{eq:portmanteau mass} replace $q_i^k$ and $q_j^k$ with the probability measures
	\begin{align*}
		\tilde q_j^k := q_j^k + \alpha_+^k \left( \frac{q_i^k \vert_{O_{i,j}^-}}{q_i^k(O_{i,j}^-)} - \frac{q_j^k\vert_{O_{j,i}^+}}{q_j^k(O_{j,i}^+)} \right) + \alpha_-^k \left( \frac{q_i^k \vert_{O_{i,j}^+}}{q_i^k(O_{i,j}^+)} - \frac{q_j^k\vert_{O_{j,i}^-}}{q_j^k(O_{j,i}^-)} \right),
	\end{align*}
	 and $\tilde q_i^k := q_i^k - \tilde q_j^k + q_j^k$, where $\{\alpha_-^k\}_{k \in \N}$ and $\{\alpha_+^k\}_{k \in \N}$ are non-negative sequences converging to zero, and $\tilde q_i^k$ and $p_i^k$ have the same
	barycenter (i.e.\ if the barycenter of $q_j^k$ is smaller than that of $p_j^k$ we set $\alpha_+^k>0$ and $\alpha_-^k=0$, etc.). Thus, $\{\tilde q^k_i\}_{k \in \N}$ and $\{\tilde q^k_j\}_{k \in \N}$ converge in $\mathcal W_r$ to
	$q_i$ and $q_j$, respectively.

	We will call $i \in \{1,\ldots, N\}$ a \emph{pivot}, if for all $\hat i \in \{1,\ldots,N\}$ with
	$\sup S_{\hat i} \leq \inf S_i$, the barycenters of $q_{\hat i}^k$ are correct (when $k$ is sufficiently large), that means, there is $k_0 \in \N$ such that
	\begin{equation}
		\label{eq:barycenters correct}
		k \geq k_0 \implies \int_\R y \, q_{\hat i}^k(dy) = \int_\R y \, p_{\hat i}^k(dy).
	\end{equation}
	For such a pivot $i$, consider all $j \neq i$ with \eqref{eq:cases}.
	By the previous step in this proof, there exists $k_1 \in \N$ such that we can change $q_i^k$ and $q_j^k$ for $k \geq k_1 \in \N$ (and denote the probability measures  $\tilde q_i^k$ and $\tilde q_j^k$ for notational convenience again by $q_i^k$ and $q_j^k$ respectively) such that
	\begin{equation}
		\label{eq:barycenters correct'}
		k\geq k_1 \implies \int_\R y \, q_j^k(dy) = \int_\R y \, p_j^k(dy).
	\end{equation}
	There are two possible cases:
	\begin{enumerate}
	\item \eqref{eq:cases} held true for all $j \neq i$ with  $\inf S_i < \sup S_j$, which would imply that (given $k$ is sufficiently large), we have not only corrected the barycenter of $q_j^k$ but also the one of $q_i^k$, since $$\int_\R yq_i^k(dy) = \sum_{l=1}^N \int_\R y p_l^k(dy) - \sum_{j\neq i}\int_\R y q_j^k(dy) = \int_\R y p_i^k(dy) .$$
	Hence, we have found the desired sequences.
	\item There are indices $k \in \{1,\ldots,N\}$ with $\sup S_i \leq \inf S_k \leq \sup S_k$. Due to minimality of $s$, there is an index $l \in \{1,\ldots,N\}$ with $\inf S_l < \sup S_i < \sup S_l$. Hence \eqref{eq:cases} holds and we can use $\{q_l^k\}_{k \in \N}$ to fix the barycenters of $\{q_i^k\}_{k \in \N}$ for $k$ sufficiently large.
	Moreover, $l$ is a pivot: Let $\hat l \in \{1,\ldots,N\}$ with $\sup S_{\hat l} \leq \inf S_l$, then
	either $\sup S_{\hat l} \leq \inf S_{i}$ whereby \eqref{eq:barycenters correct} holds for $\hat l$,
	or $\inf S_i < \sup S_{\hat l} \leq \inf S_l < \sup S_i$.
	In the latter case, we have that $\hat l$ satiesfies \eqref{eq:cases} and therefore \eqref{eq:barycenters correct'}. Notice that \begin{equation}
\sup	S_l > \sup S_i. \label{eq:algo_advances}
	\end{equation} 
	\end{enumerate}
	
	Taking $i=i_{\min}$ as our initial pivot, we can iterate the above reasoning. Since there are only finitely many elements, the procedure terminates as ensured by \eqref{eq:algo_advances}.
	}
\end{proof}

The key ingredient of this part is the following stability result concerning the notion of martingale $C$-monotonicity:

\begin{theorem}\label{thm:stability MWOT}
	Let $C,C_k\in \mathcal C(\R\times \mathcal P_r(\R))$, $k\in\N$, and $C_k$ converges uniformly to $C$. If $P,P^k\in \mathcal P_r(\R\times\mathcal P_r(\R))$, $k\in\N$, are such that
	\begin{enumerate}[label=(\alph*)]
		\item for all $k\in\N$ the measure $P^k$ is martingale $C_k$-monotone,
		\item the sequence $\{P^k\}_{k\in\N}$ converges to $P$,
	\end{enumerate}
then $P$ is martingale $C$-monotone. Moreover, if $\pi,\pi^k\in \mathcal P_r(\R\times \R)$ and $C_k$ is convex in the second argument and such that
	\begin{enumerate}[label=(\alph*')]
		\item for all $k\in\N$ the measure $\pi^k$ is martingale $C_k$-monotone,
		\item the sequence $\{\pi^k\}_{k\in\N}$ converges to $\pi$,
	\end{enumerate}
	then $\pi$ is martingale $C$-monotone.
\end{theorem}

\begin{proof}
	The proof runs parallel to the one of Theorem~\ref{thm:stability WOT}: Using Lemma~\ref{lem:martingale approximative sequence} we can alter Lemma~\ref{lem:closed Gamma} such that for all $\epsilon \geq 0$ and $N\in\N$ the set
	\begin{align*}
	\tilde{\Gamma}^\epsilon_N := \Bigg\{(x_i,p_i)_{i=1}^N \in (\R \times \mathcal P_r(\R))^N\Big|\forall m_1,\ldots,m_N\in P_r(\R)\text{ s.t. }\sum_{i=1}^N p_i = \sum_{i=1}^N m_i \text{ and }\\
	\int_\R y \, m_i(dy) = \int_\R y \, p_i(dy) \text{ for } i = 1,\ldots, N, \text{ we have } \sum_{i=1}^NC(x_i,p_i)\leq \sum_{i=1}^N C(x_i,m_i) + \epsilon\Bigg\}
	\end{align*}
	is closed. The aim is to construct a martingale $C$-monotone set $\Gamma$ on which $P$ is concentrated. So, we write $P^{k,\otimes N}$ and $P^{\otimes N}$ for the $N$-fold product measure of $P^k$ resp.\ $P$ where $N\in\N$. By martingale $C_k$-monotonicity and uniform convergence we find for any $\epsilon > 0$ a natural number $k_0$ such that $P^{k,\otimes N}$, $k\geq k_0$, is concentrated on $\tilde \Gamma^\epsilon_N$. { As $\tilde \Gamma_N^\epsilon$ is closed}, the Portmanteau theorem yields that $P^{\otimes N}$ is concentrated on $\tilde \Gamma_N^\epsilon$:
	\begin{align*}
	1 = \limsup_k P^{k,\otimes N}(\tilde \Gamma^\epsilon_N) \leq P^{\otimes N}(\tilde \Gamma^\epsilon_N) = 1.
	\end{align*}
	As a consequence, we find that $P^{\otimes N}$ gives full measure to the closed set $\tilde \Gamma_N := \tilde \Gamma^0_N$.
	{With the same line of argument as in the proof of Theorem \ref{thm:stability WOT}, we can find from here a closed, martingale $C$-monotone set $\tilde \Gamma$ with $P(\tilde \Gamma) = 1$.
	}
	%
	
	To show the second assertion, we embed $\pi^k\in\mathcal P_r(\R \times \R)$ into $\mathcal P_r(\R \times \mathcal P_r(\R))$ owing to the map $J$. Then, by compactness of $\Lambda_M(\mu,\nu)$, we find an accumulation point $P\in \mathcal P_r(\R\times \mathcal P_r(\R))$ of $\{J(\pi^k)\}_{k\in\N}$. Note that $P$ gives full measure to $\{(x,p) \in \R \times \mathcal P_1(\R)\colon x = \int_\R y \, p(dy)\}$, and
	\[
		\mu(dx) \, I(P_x) =: \pi \in \Pi_M(\mu,\nu)
	\]
	determines a martingale coupling, which is likewise a {$\mathcal W_r$-}accumulation point of $\{\pi^k\}_{k\in\N}$. 
	{Since $P \in \mathcal P_r(\R \times \mathcal P_r(\R))$ we have $\mu$-almost surely that
	$I(P_x) \in \mathcal P_r(\R)$.
	As $P$ is concentrated on the martingale $C$-monotone set $\tilde \Gamma$, we find for any $x\in X$ such that $P_x(\tilde \Gamma_x) = 1$ and $I(P_x) \in \mathcal P_r(\R)$} a sequence of measures $p_i^x \in \tilde \Gamma_x \subset \mathcal P_r(\R),~i\in\N$, with
	\[
		q_n^x := \frac{1}{n}\sum_{i=1}^n p_i^x \to I(P_x) = \pi_x,\quad n\to \infty,\quad  \text{ in }\mathcal W_r.	
	\]
	{Since $C$ is convex in the second argument, we can reason as in Remark \ref{rem:cycl monotonicity} \ref{rem:cycl monotonicity 2}, and find that for $\mu$-a.e.\ $x$} $(x,q_n^x)$ is already contained in the martingale $C$-monotone set $\tilde \Gamma$. 
	Then by closure of $\tilde \Gamma$ we conclude $(x,\pi_x) \in \tilde \Gamma$ for $\mu$-a.e.\ $x$, and {by the same reasoning as in Remark \ref{rem:cycl monotonicity} \ref{rem:cycl monotonicity 1}} martingale $C$-monotonicity of $\pi$.
\end{proof}

\begin{lemma}\label{lem:c,mart mon equiv C,mart mon}
	{Let $\pi \in \Pi_M(\mu,\nu)$, $b \in L^1(\nu)$, $c \colon X \times Y \to \R$ be jointly measurable, and $c(x,\cdot)$ be upper semicontinuous and upper bounded by a postive multiple of $b$ for all $x\in\R$.}
	Then $\pi$ is $(c,\mathcal F_M)$-monotone if and only if $\pi$ is martingale $C$-monotone (with $C(x,p) := \int_\R c(x,y)p(dy)$).
\end{lemma}
\begin{proof}
Let $\hat \Gamma\subset \R\times\R$ be $(c,\mathcal F_M)$-monotone with $\pi(\hat\Gamma)=1$. Consider the Borel measurable set{
	\[
		\Gamma := \left\{(x,p) \in X\times \mathcal P_1(Y)\colon p(\hat \Gamma_x) = 1, \int_\R y \, p(dy) = x, \int_\R |b(y)| \, p(dy) < \infty \right\},
	\]
	where $(x,\pi_x) \in \Gamma$ for $\mu$-almost every $x \in X$.}
	Take any sequence $(x_1,p_1),\ldots,(x_N,p_N)\in \Gamma$ with competitors $q_1,\ldots q_N\in\mathcal P_1(\R)$, i.e.,
	\[
		\sum_{i=1}^N p_i = \sum_{i=1}^N q_i,\quad \int_\R y \, p_i(dy) = \int_\R y \, q_i(dy).	
	\]
	We find for any $(x_i,p_i) \in \Gamma$, a sequence of finitely supported measures $\{p_i^k\}_{k\in\N}$ where {for all $k\in\N$ we have that $p_i^k(\hat \Gamma_{x_i}) = 1$, $\int_\R y \, p_i^k(dy) = x_i$, and }
	\begin{align}
		\label{eq:cmart cost converges}
		\lim_{k \to \infty}\int_\R c(x_i,y) \, p_i^k(dy) = \int_\R c(x_i,y) \, p_i(dy),
	\\	\label{eq:cmart majorant converges}
	{\lim_{k\to\infty}\int_\R |b(y)| \, p_i^k(dy) = \int_\R|b(y)| \, p(dy).}
	\end{align}{
	Thus, Lemma~\ref{lem:martingale approximative sequence} provides $\{ q_1^k,\ldots,q_N^k \}$, sequences of feasible and finitely supported competitors  with corresponding limit points $\{ q_1,\ldots,q_N \}$.}
	Then $(c,\mathcal F_M)$-monotonicity yields
	\begin{align*}
		\sum_{i=1}^N \int_\R c(x_i,y) \, p_i(dy) &= \lim_{k \to \infty}\sum_{i=1}^N\int_\R c(x_i,y) \, p_i^k(dy) \\
		&\leq \limsup_{k \to \infty} \sum_{i=1}^N \int_\R c(x_i,y) \, q_i^k(dy) \leq \sum_{i=1}^N \int_\R c(x_i,y) \, q_i(dy),
	\end{align*}{
	where we used $\eqref{eq:cmart cost converges}$ for the first equality, and upper semicontinuity  of $c(x,\cdot)$, upper boundedness of $c(x,\cdot) - a(x) b(\cdot)$ for some $a(x) > 0$, and \eqref{eq:cmart majorant converges}} in the last inequality.
	{We have shown that $\Gamma$ is martingale $C$-monotone. 
	Since $(x,\pi_x) \in \Gamma$ for $\mu$-a.e.\ $x \in \R$, we conclude that $\pi$ is martingale $C$-monotone.}

	Now, let $\pi$ be martingale $C$-monotone on a Borel measurable set $\Gamma\subset \R\times\mathcal P(\R)$,
	{that is, $J(\pi)(\Gamma) = 1$.} Due to the variation of Lusin's theorem, Lemma~\ref{lem:Gammahat}, there is an analytically measurable set $\hat\Gamma\subset X\times Y$ satisfying:
		\begin{itemize}
	\item[(i)] For any $(x,p) \in \Gamma$ we have that $p$ is concentrated on the fibre $\hat \Gamma_x = \{y \in Y \colon (x,y) \in \hat \Gamma\}$, i.e., $p(\hat\Gamma_x) = 1$.
	\item[(ii)] For any $(x,y) \in \hat\Gamma$ we find $(x,p) \in \Gamma$ and a Borel measurable set $K \subset \hat \Gamma_x$ such that
	\begin{enumerate}
		\item $c$ restricted to the fibre $\{x\} \times K$ is continuous,
		\item  $y\in \supp(p|_K)\cap K$ and
			\begin{equation}
				\label{eq:limit to cost}
				\int_{B_\delta(y) \cap K} \frac{c(x,z)}{p(B_\delta(y)\cap K)} \, p(dz) \rightarrow c(x,y)\quad\text{for }\delta\searrow 0.
			\end{equation}
	\end{enumerate}
	\end{itemize}
	 In particular, we have that $\pi$ is concentrated on $\hat \Gamma$ by property $(i)$. To see that $\hat \Gamma$ is $(c,\mathcal F_M)$-monotone, take a finite subset of $\hat \Gamma$, i.e., $G := \{(x_1,y_1),\ldots,(x_N,y_N)\} \subset \hat \Gamma$. Let $\alpha$ be supported on $G$ and $\beta$ be a competitor, i.e.,
	\begin{gather*}
		 \proj_1(\alpha) =  \proj_1(\beta),\quad \proj_2(\alpha) =  \proj_2(\beta),\\\int_\R y \, \alpha_{x_i}(dy) = \int_\R y \, \beta_{x_i}(dy),\quad i=1,\ldots,N.
	\end{gather*}
	 By property $(ii)$ of $\hat \Gamma$ we find for each $(x_i,y_i)$, $1\leq i \leq N$, {$(x_i,p_i) \in \Gamma$ and sets $K_i$ such that $c$ is continuous on $\{x_i\}\times K_i$, $y_i \in K_i$, and \eqref{eq:limit to cost} holds.}
	 Let
	 \[
		 \alpha^k(dx,dy) := \sum_{i=1}^N \delta_{x_i}(dx)\frac{\alpha^k(x_i,y_i)}{p_i(K_i\cap B_{\frac{1}{k}(y_i)})} p_i|_{K_i\cap B_{\frac{1}{k}}(y_i)}(dy),\quad k\in\N.
	\]
	Then $\alpha^k$ converges weakly to $\alpha$, and by Lemma~\ref{lem:martingale approximative sequence} we find a sequence { $\{\beta^k\}_{k \in \N}$, where $\beta^k$ is a competitor of $\alpha^k$}, which converges weakly to $\beta$.
	{As $c(x,\cdot)$ is upper semicontinuous and finitely valued, there exists $k_0 \in \N$ with 
	\begin{equation}
		\label{eq:cmart usc=>bounded}
		\sup \left\{ c(x,y)	\colon y \in \bigcup_{i = 1}^N B_{\frac{1}{k_0}}(y_i) \right\} < \infty.
	\end{equation}}	
	Thus, we have
	\begin{align*}
		\int_{\R \times \R} c(x,y) \, \alpha(dx,dy) &= \lim_{k \to \infty} \int_{\R \times \R} c(x,y) \, \alpha^k(dx,dy) 
	\\	&\leq \liminf_{k \to \infty} \int_{\R \times \R} c(x,y) \, \beta^k(dx,dy) \leq \int_{\R \times \R} c(x,y) \, \beta(dx,dy),
	\end{align*}
	where we {obtain the first equality due to \eqref{eq:limit to cost}, the first inequality due to martingale $C$-monotonicity, and the final inequality due to upper semicontinuity and \eqref{eq:cmart usc=>bounded}.}
\end{proof}



\begin{proof}[Proof of Theorem \ref{Thm stab mot intro}]
	{Since $\pi^k$ is optimizer of \eqref{MOT} for cost $c_k$ with marginals $\mu_k$ and $\nu_k$,
	$\pi^k$ is $(c_k,\mathcal F_M)$-monotone by \cite[Theorem 1.4]{BeGr14}.}
	By Lemma~\ref{lem:c,mart mon equiv C,mart mon}, we find that $\pi^k$ is martingale $C_k$-monotone.
	Then Theorem~\ref{thm:stability MWOT} shows that martingale monotonicity is preserved for $k\to \infty$. 
	Hence, $\pi$ is martingale $C$-monotone {and thus $(c,\mathcal F_M)$-monotone by Lemma \ref{lem:c,mart mon equiv C,mart mon}.}
	Finally, $\pi$ is optimal for \eqref{MOT} with cost $c$ by \cite[Theorem 1.3]{Gr16}.
\end{proof}

\begin{proof}[Proof of Corollary \ref{Cor stab mot intro}]
Let $\pi^k$ be optimal for \eqref{MOT} for the cost function $c_k$ and marginal measures $\mu_k,\nu_k$. We may apply Theorem \ref{Thm stab mot intro} showing that every accumulation point (with respect to weak convergence) of $\{\pi^k\}_{k \in \N}$ is an optimizer for \eqref{MOT} for the cost function $c$ and marginal measures $\mu,\nu$.
{On $\R^2$ we may choose the metric $D((x,y),(\bar x,\bar y))= \sqrt[r]{|x-\bar x|^r+|y-\bar y|^r}$ in order to define the $r$-Wasserstein metric on $\mathcal P_r(\mathbb R^2)$.
Then
\begin{multline*}
	\int_{\R\times\R} D((0,0),(x,y))^r\,\pi^k(dx,dy) = \int_\R |x|^r \, \mu_k(dx) + \int_\R |y|^r \, \nu_k(dy)
\\	\to \int_\R |x|^r \, \mu(dx) +\int_\R |y|^r \, \nu(dy) = \int_{\R\times\R} D((0,0),(x,y)) \, \pi(dx,dy),
\end{multline*}
as $k$ tends to $\infty$}
for any coupling $\pi$ with marginals $\mu,\nu$.
It follows {by \cite[Definition 6.8 (i)]{Vi09}} that the accumulation points of $\{\pi^k\}_{k \in \N}$ under the weak topology or under the $\mathcal W_r$-topology coincide.
The following inequality is immediate
\[
	\liminf_{k\to\infty}\, \int_{\R\times\R} c_k(x,y) \, \pi^k(dx,dy)\geq\inf_{\pi\in\Pi_M(\mu,\nu)} \int_{\R\times\R} c(x,y)\, \pi(dx,dy).
\]
In order to  prove
\[
	\limsup_{k\to\infty}\, \int_{\R\times\R} c_k(x,y) \, \pi^k(dx,dy)\leq\inf_{\pi\in\Pi_M(\mu,\nu)} \int_{\R\times\R} c(x,y) \, \pi(dx,dy),
\]
it suffices to observe that if (for some subsequence which we do not track) $\pi^k\to \pi$ in $\mathcal W_r$, then $\int_{\R \times \R} c_k(x,y) \pi^k(dx,dy)\to\int_{R \times \R} c(x,y) \, \pi(dx,dy)$, since $\pi$ must be optimal for the r.h.s.
{But this is clear since $c_k\to c$ uniformly and $c$ is dominated by a positive multiple of $(x,y) \mapsto 1 + D((0,0),(x,y))^r$.}
\end{proof}

\section{The relation of OT and WOT}\label{sec:wotot}

In this part we explore the relationship between (classical) $c$-cyclical monotonicity in optimal transport, and $C$-monotonicity in optimal weak transport, in the case when $C(x,p)=\int_Y c(x,y)p(dy)$.
{We recall the notion of $c$-cyclical monotonicity, cf.\ \cite[Definition 5.1]{Vi09}:
\begin{definition}[{$c$}-cyclical monotonicity]
	Let $c \colon X \times Y \to \mathbb R$.
	\begin{enumerate}
		\item A set $\Gamma \subset X \times Y$ is called $c$-cyclically monotone iff for any
		$N \in \N$, and any collection $(x_1,y_1),\ldots,(x_N,y_N) \in \Gamma$, we have
		\[
			\sum_{i = 1}^N c(x_i,y_i) \leq \sum_{i = 1}^N c(x_i,y_{i+1}),	
		\]
		with the convention $y_{N+1} = y_1$.
		\item A probability measure $\pi \in \mathcal P(X \times Y)$, which is concentrated on a $c$-cyclically monotone set, is then called $c$-cyclically monotone.
	\end{enumerate}
\end{definition}}

Our main result {of this section} is: 

\begin{theorem}\label{thm:C-Mon equiv c-cycl}
	{Let $\pi \in \Pi(\mu,\nu)$, $b \in L^1(\nu)$, $c\colon X\times Y\to \R$ be jointly measurable, and $c(x,\cdot)$ be upper semicontinuous and upper bounded by a positive multiple of $b$ for all $x\in X$.
	Then $\pi$ is $c$-cyclically monotone if and only if $\pi$ is $C$-monotone (with $C(x,p) := \int_Y c(x,y) \, p(dy)$).}
\end{theorem}


\begin{lemma}\label{lem:restriction property}
	{Let $c\colon X \times Y \to \R$ be jointly measurable. Let $\pi \in \Pi(\mu,\nu)$ be $C$-monotone with $C(x,p) := \int_Y c(x,y)\,p(dy)$ and $c(x,\cdot) \in L^1(\pi_x)$ for $\mu$-almost every $x \in X$. Then}
	for all $A\in\mathcal B(X\times Y)$ the restriction of $\pi$ to $A$, i.e., $\pi|_A$, is also $C$-monotone.
\end{lemma}

\begin{proof}
	Let $x_1,\ldots,x_N\in\Gamma \cap \{x\in X\colon \pi_{x}(A_{x}) > 0\}$, $N\in\N$, where $\Gamma$ is a $C$-monotone subset and $A_x := \{y\in Y \colon (x,y) \in A\}$. To shorten notation we write
	\begin{align*}
		p_i := \pi_{x_i}|_{A_{x_i}}, \quad \bar p_i := \frac{1}{p_i(A_{x_i})} p_i,\quad i=1,\ldots,N.
	\end{align*}
	Without loss of generality we can assume that restricting and disintegrating commutes, $\bar p_i = (\pi|_A)_{x_i}$ for all $i = 1,\ldots,N$.
	{
	Let $n > 1$ be a natural number with reciprocal value smaller than $\min_i p_i(A_{x_i})$.
	Define the probability measures
	}
	\[ r_i = \frac{n\pi_{x_i} - \bar p_i}{n-1} \in \mathcal P(Y),\quad i = 1,\ldots,N,\]
	{which by linearity of $p \mapsto C(x,p)$ satisfy}
	\begin{equation}
		\label{eq:identity}
		n \sum_{i=1}^N C(x_i,\pi_{x_i}) = \sum_{i=1}^N C(x_i,\bar p_i) + (n-1) C(x_i,r_i).
	\end{equation}
	To show $C$-monotonicity of $\pi|_A$, let $\bar q_1,\ldots,\bar q_N\in \mathcal P(Y)$ with $\sum_{i=1}^N\bar q_i = \sum_{i=1}^N \bar p_i$. Define
	\begin{align*}
		q_{i,j} := \begin{cases} \bar q_i & j = 1,\\ r_i & 2\leq j \leq n,\end{cases}\quad i = 1,\ldots,N,
\end{align*}
{whence, $q_i := \frac{1}{n}\sum_{j = 1}^n q_{i,j} \in \mathcal P(Y)$, $i = 1,\ldots,N$, define competitors of $(\pi_{x_i})_i$, since
\[
	\sum_{i = 1}^N q_i = \frac{1}{n}\sum_{i=1}^N \sum_{j=1}^n q_{i,j} = \sum_{i=1}^N \pi_{x_i}.
\]}
From $C$-monotonicity of $\pi$ we derive that{
\[
	n \sum_{i=1}^N C(x_i,\pi_{x_i}) \leq n \sum_{i = 1}^N C(x_i,q_i) = \sum_{i=1}^N \sum_{i=1}^n C(x_i,q_{i,j}),
\]
which is by \eqref{eq:identity}} equivalent to
\[
	\sum_{i=1}^NC(x_i,\bar p_i) \leq \sum_{i=1}^N C(x_i,\bar q_i).
\]\end{proof}

The next proposition has Theorem~\ref{thm:C-Mon equiv c-cycl} as an immediate corollary:
\begin{proposition}\label{prop:connection}
	Let $c\colon X \times Y \rightarrow \R$ be jointly measurable, {$b \colon Y \to \R$} be measurable, and {$c(x,\cdot)$ be upper semicontinuous and dominated by a positive multiple of $b$ for each $x \in X$.}
	If $\Gamma \subset X \times \mathcal P(Y)$ is a $C$-monotone analytic set (where $C(x,p) := \int_Y c(x,y)p(dy)$), then the set $\hat \Gamma \subset X \times Y$ from Lemma~\ref{lem:Gammahat} is $c$-cyclically monotone. Conversely, if a $c$-cyclical monotone set $\hat\Gamma\subset X\times Y$ is given, then
	\[
		\Gamma := \left\{(x,p) \in X\times \mathcal P(Y)\colon p(\hat\Gamma_x) = 1,{b} \in L^1(p) \right\}
	\]
	is $C$-monotone.
\end{proposition}

\begin{proof}
	Suppose that $\Gamma$ is $C$-monotone. Let a finite number of points $(x_1,y_1),\ldots,(x_N,y_N)$ in $\hat \Gamma$ be given. We find $(x_i,p_i) \in \Gamma$ with $p_i(\hat \Gamma_{x_i}) = 1$ as in Lemma~\ref{lem:Gammahat}. This means that for each $y_i$ there is a Borel measurable set $K_i\subset Y$ with $y_i \in \supp(p_i|_{K_i})$, $c|_{\{x_i\}\times K_i}$ is continuous, and
	\[
		p_i^\epsilon := \frac{1}{p_i(B_\epsilon(y_i) \cap K_i)}p_i|_{B_\epsilon(y_i)\cap K_i},\quad
		\int_{K_i\cap B_\epsilon(y_i)} \frac{c(x_i,z)}{p(K_i\cap B_\epsilon(y_i))} \, p(dz) \rightarrow c(x_i,y_i).
	\]
	The measure $\frac{1}{N}\sum_{i = 1}^N \delta_{x_i}p_i$ is by construction $C$-monotone. 
	By the restriction property of Lemma~\ref{lem:restriction property}, the measure $\frac{1}{N}\sum_{i = 1}^N \delta_{x_i}p_i^\epsilon$ is likewise $C$-monotone.
	Then by upper semicontinuity {of $c(x,\cdot)$ and \eqref{eq:cmart usc=>bounded} (which is here applicable),} we conclude (with the convention that $N+1 = 1$):
	\begin{align*}
		\sum_{i=1}^N c(x_i,y_i) = \lim_{\epsilon\searrow 0} \sum_{i=1}^N C(x_i,p^\epsilon_i)
		\leq \limsup_{\epsilon\searrow 0} \sum_{i=1}^N C(x_i,p^\epsilon_{i+1}) \leq \sum_{i=1}^N c(x_i,y_{i+1}).
	\end{align*}
	Now let $\hat \Gamma$ be a $c$-cyclical monotone set.{	
	Recall that $c(x,\cdot)$ is dominated by $b$ and $b \in L^1(p)$ for all $(x,p) \in \Gamma$.
	Therefore, by the law of large numbers, we have for an iid sequence $(Y_i)_{i\in \N}$ distributed according to $p$ that almost surely
	\[
		\lim_{n \to \infty}\frac{1}{n} \sum_{i=1}^n c(x,Y_i) = \int_Y c(x,y) \, p(dy),\quad
		\lim_{n \to \infty}\frac{1}{n} \sum_{i = 1}^n |b(Y_i)| = \int_Y |b(y)| \, p(dy).
	\]}
	Thus, we can approximate $p$ by discrete measures concentrated on $\hat\Gamma$ and  thereby obtain $C$-monotonicity of $\Gamma$: Let $(x_1,p_1),\ldots,(x_N,p_N)\in \Gamma$ and $q_1,\ldots,q_N\in\mathcal P(Y)$ with $\sum_{i=1}^N p_i = \sum_{i=1}^N q_i$. We find by discretely approximating $p_1,\ldots,p_N$ on $\hat \Gamma$, and then using $c$-cyclical monotonicity, the sequence of competitors constructed in Lemma~\ref{lem:approximative sequence}, {and upper semicontinuity of $c(x,\cdot)$ and upper boundedness of $c(x,\cdot) - a(x)b(\cdot)$ for some $a(x) > 0$} that
	\[
		\sum_{i=1}^N C(x_i,p_i) = \lim_{n \to \infty} \frac{1}{n} \sum_{i=1}^N \sum_{j=1}^n c(x_i,y_j^i) \leq \limsup_n \sum_{i=1}^N \int_Y c(x_i,y) \, q_i^n(dy) \leq \sum_{i=1}^N C(x_i,q_i).
	\]
\end{proof}

\section{Epilogue}\label{sec:epi}

Convexity is a natural assumption in the setting of weak transport. It is known that convexity of $C(x,\cdot)$ is necessary to obtain general existence of minimizers in the space of couplings, see \cite[Example 3.2]{AlBoCh18}. Similarly, convexity is required for $C$-monotonicity to be a necessary optimality criterion:

\begin{example}\label{ex:1}
	Let $X = \{0\}$, $Y = \{0,1\}$, $\mu = \delta_0$, $\nu = \frac{1}{2}(\delta_0 + \delta_1)$, and 
	\[ C(x,p) = \min(p(\{0\}), p(\{1\})).\]
	Then $C$ is continuous and concave on $X \times \mathcal P(Y)$, but the only (and therefore optimal) coupling $\mu \otimes \nu \in \Pi(\mu,\nu)$ is not $C$-monotone. Indeed,
	\begin{align*}
		2 C(0,\nu) = 1 > 0 = C(0,\delta_0) + C(0,\delta_1).
	\end{align*}
\end{example}

In the classical optimal transport setting $c$-cyclical monotonicity implies optimality already when the cost function $c$ is bounded from below and real valued, see \cite{Be12}. A similar conclusion cannot be drawn in optimal weak transport as $C$-monotonicity does not even imply optimality when $C$ is lower continuous:

\begin{example}\label{ex:2}
	Let $X = [0,1]$, $Y = [0,1]$ and $C(x,p) := p_{ s,\delta_x}([0,1])=p([0,1]\setminus\{x\})$, which is a measurable cost function (c.f.\ Proposition~\ref{prop:measurability of lebesgue decomp}) and lower semicontinuous for fixed $x \in [0,1]$: Given a weakly convergent sequence $p_k \rightharpoonup p \in \mathcal P(Y)$, the Portmanteau theorem yields
	$$\liminf_k C(x,p_k) = \liminf_k p_k([0,1]\setminus\{x\}) \geq p([0,1]\setminus \{x\}) = C(x,p).$$
	The product coupling $\pi:=\lambda \otimes \lambda \in \Pi(\lambda,\lambda)$ where $\lambda$ denotes the uniform distribution on $[0,1]$ is in fact $C$-monotone: Since $\pi_x = \lambda$ ($\lambda$-almost surely), we have for any $x_1,\ldots,x_N \in [0,1]$ and $q_1,\ldots,q_N\in\mathcal P(Y)$ with
	$$\sum_{i=1}^N \pi_{x_i} = N\lambda = \sum_{i=1}^N q_i$$
	that $q_i$ is absolutely continuous to $\lambda$, hence,
	$$\sum_{i=1}^N C(x_i,\pi_{x_i}) = N = \sum_{i=1}^N C(x_i,q_i).$$
	But the unique optimizer { of \eqref{WOT} with cost $C$} is given by $\pi^*(dx,dy) := \lambda(dx)\delta_x(dy)$ and
	\[\int_{[0,1]} C(x,\pi^*_x)\,\lambda(dx) = 0 < 1 = \int_{[0,1]} C(x,\pi_x)\,\lambda(dx).\]
\end{example}

Even when $C$ is given as the integral with respect to some $c\colon X\times Y\rightarrow \R$, we cannot hope that $C$-monotonicity implies optimality and/or $c$-cyclical monotonicity, as the next example shows.

\begin{example}\label{ex:3}
	Let $X=[0,1]$, $Y=[0,1]$ and $C(x,p) = \int_{[0,1]} c(x,y)p(dy)$ with $c(x,y) := \mathbbm{1}_{\{x\}}(y)$. As in the previous example, the product coupling $\pi = \lambda \otimes \lambda$ is $C$-monotone, but not optimal, whereas $\pi^*(dx,dy)= \lambda(dx)\delta_x(dy)$ is optimal and in particular $c$-cyclical monotone.
\end{example}

The failure of $C$-monotonicity to provide optimality in these simple settings (the cost function $C$ is even bounded and lower semicontinuous) is caused by the manner it varies over `$X\times Y$': The variation over $X$ is pointwise (similar to $c$-cyclical monotonicity), whereas over $Y$ variations are taken in a weak sense, i.e., we require that the $Y$-intensities of the two competing sequences {to} agree. Here, $C$-monotonicity is unable to detect the jump from 1 to 0 at $x$, and we could argue that $C$-monotonicity yields optimality of $\pi$ under all couplings $\chi$ in $\Pi(\lambda,\lambda)$ such that $\chi_x \ll \lambda$ for $\lambda$-almost all $x\in [0,1]$. To be able to compare with all competing couplings, more regularity of $C$ in `$Y$-direction' is necessary, e.g., upper semicontinuity as in Theorem~\ref{thm:C-Mon equiv c-cycl} or uniform equicontinuity as in Theorem~\ref{thm:WOT optimality}, but the question remains open precisely how much regularity is required. The authors conjecture that $C$ being upper Carath\'eodory, i.e., $C\colon X\times\mathcal P(Y)\to\R$ is jointly measurable and for fixed $x\in X$ the map $p\mapsto C(x,p)$ is upper semicontinuous, is required for sufficiency of $C$-monotonicity as optimality criterion.

Notably, Example~\ref{ex:2}, when taking $\tilde C := -C$ as cost, provides an upper semicontinuous cost function which is convex (in fact linear) in the following sense: Let $Q\in \mathcal P(\mathcal P([0,1]))$, then
\begin{equation}
	\label{eq:jensen}
	\int_{\mathcal P([0,1])} \tilde C(x,p) Q(dp) = \tilde C\left(x,\int_{\mathcal P([0,1])} p Q(dp)\right).
\end{equation}
Therefore, {$C$ being lower semicontinuous and convex }
is a strictly stronger assumption than the convexity property stated {in \eqref{eq:jensen}} together with measurability, as demanded in Theorem~\ref{thm:WOT monotonicity}.

\section{Appendix}

{ The existence of a concave modulus of continuity was employed in the proof of Theorem \ref{thm:WOT optimality}. We split the argument into a general part, and a part optimized for the setting of the paper. 

\begin{lemma}\label{lem:mod_cont_gnral}
Let $(Y,d)$ be a metric space, $r\geq 1$, and $f\colon X \times Y\rightarrow \R$ be $d$-uniformly continuous uniformly in $x\in X$, i.e.\ such that
	\[	\theta(\delta) := \sup\left\{ |f(x,a) - f(x,b)| \colon x \in X,\, a,b \in Y \text{ s.t.} \, d(a,b)^r \leq \delta \right\},	\]
	vanishes for $\delta \searrow 0$. Assume additionally that
	$$\forall c>0 \colon\, \sup_{\substack{a,b \in Y \colon d(a,b)\geq c,\\ x\in X}} \frac{|f(x,a)-f(x,b)|}{d(a,b)^r}<\infty.$$
	Then there is $\tilde\theta:[0,\infty)\to [0,\infty]$ concave such that $\lim_{\delta\searrow 0}\tilde \theta(\delta)=0$ and
	$$|f(x,a)-f(x,b)|\leq \tilde\theta\left( d(a,b)^r\right).$$	
\end{lemma}

\begin{proof}
Fixing $\epsilon>0$, there is $\delta_\epsilon$ such that $|f(x,a)-f(x,b)|\leq \epsilon$ if $d(a,b)\leq \delta_\epsilon$. If on the other hand $d(a,b)> \delta_\epsilon$ then
\begin{align*}
|f(x,a)-f(x,b)|&\leq d(a,b)^r\, \sup_{\substack{\bar a,\bar b \in Y\colon d(\bar a,\bar b)\geq \delta_\epsilon,\\ \bar x\in X}} \frac{|f(\bar x,\bar a)-f(\bar x,\bar b)|}{d(\bar a,\bar b)^r} =:  d(a,b)^r\, K_\epsilon.
\end{align*}
Hence overall $|f(x,a)-f(x,b)|\leq \epsilon+ d(a,b)^rK_\epsilon$ and in particular
$$\theta(\delta)\leq \epsilon+ \delta K_\epsilon,$$
and  $K_\epsilon<\infty$. Denoting by $\tilde{\theta}$ the concave envelope of $\theta$, i.e.\ the infimum over all affine functions majorizing $\theta$, we conclude $\tilde \theta(\delta)\leq \epsilon+ \delta K_\epsilon,$ and so $\lim_{\delta\to 0}\tilde \theta(\delta)=0$, as $\epsilon$ was arbitrary. Finally remark that $|f(x,a)-f(x,b)|\leq \theta(d(a,b)^r)\leq \tilde \theta(d(a,b)^r) $.
 
\end{proof}

\begin{lemma}\label{lem:mod_cont_wasser}
Let $C\colon X\times \mathcal P_r(Y)\rightarrow \R$ and suppose that $C(x,\cdot)$ is $\mathcal W_r$-uniformly continuous uniformly in $x\in X$, i.e.\ such that
	\[	\theta(\delta) := \sup\left\{ |C(x,p) - C(x,q)| \colon x \in X, (p,q) \in \mathcal P_r(Y)^2 \text{ s.t.} \mathcal W_r(p,q)^r \leq \delta \right\},	\]
	vanishes for $\delta \searrow 0$.
Then there is $\tilde\theta:[0,\infty)\to [0,\infty]$ concave such that $\lim_{\delta\searrow 0}\tilde \theta(\delta)=0$ and
	$$|C(x,p)-C(x,q)|\leq \tilde\theta\left( \mathcal W_r(p,q)^r\right).$$	
\end{lemma}

\begin{proof}
We apply Lemma \ref{lem:mod_cont_gnral}.
It suffices to show that for any $c > 0$
\begin{equation}\label{eq:to show}
	\sup_{p,q \in \mathcal P_r(Y) \colon \mathcal W_r(p,q)^r \geq c}\frac{|C(x,p)-C(x,q)|}{\mathcal W_r(p,q)^r}<\infty.
\end{equation}
To this end choose $0 < \delta \leq \frac{c}{2}$ such that $\theta(\delta)<\infty$.
For any $p,q \in \mathcal P_r(Y)$ with $\mathcal W_r(p,q)^r \geq c$ there is $N \geq 2$ such that
\[
	(N-1)\delta \leq \mathcal W_r(p,q)^r \leq N\delta .
\]
Denoting $[p,q]_\alpha=(1-\alpha)p+\alpha q$, by convexity of optimal transport we have\footnote
{Indeed, if $\beta >\alpha$ we write $[p,q]_\beta := \frac{1-\beta}{1-\alpha} [p,q]_\alpha + \frac{\beta-\alpha}{1-\alpha}q$ and first deduce $\mathcal W_r([p,q]_\alpha,[p,q]_\beta)^r\leq \frac{\beta-\alpha}{1-\alpha}\mathcal W_r([p,q]_\alpha,q)^r $, by convexity of optimal transport with respect to marginals. Iterating the argument we find $\mathcal W_r([p,q]_\alpha,[p,q]_\beta)^r\leq (\beta-\alpha)\mathcal W_r(p,q)^r $.} for all $\alpha,\beta\in[0,1]$
\[
	\mathcal W_r([p,q]_\alpha,[p,q]_\beta)^r\leq |\alpha-\beta|\mathcal W_r(p,q)^r.
\]
Hence,
\begin{align*}
|C(x,p)-C(x,q)|&\leq \sum_{k=1}^N \left|C(x,[p,q]_{(k-1)/N})-C(x,[p,q]_{k/N})\right|\\
&\leq \sum_{k=1}^N \theta \left( \frac{\mathcal W_r(p,q)^r}{N} \right) \leq N \theta (\delta) \leq \frac{\theta(\delta) \mathcal W_r(p,q)^r N}{\delta (N-1)} \leq 2 \mathcal W_r(p,q)^r \frac{\theta(\delta)}{\delta}.
\end{align*}
and we conclude that the left-hand side of \eqref{eq:to show} is bounded by $\frac{2\theta(\delta)}{\delta} < \infty$.
\end{proof}
}

{The following lemma can be viewed as a measurable version of Lusin's theorem.
It is applied in Section \ref{sec:mot} and \ref{sec:wotot} to connect martingale $C$-monontone respectively $C$-monotone sets $\Gamma$ with $(c,\mathcal F_M)$-monotone respectively $c$-cyclically monotone sets $\hat \Gamma$.
Namely this is done in Lemma \ref{lem:c,mart mon equiv C,mart mon} and Proposition \ref{prop:connection}.
}

\begin{lemma}\label{lem:Gammahat}
	Let $\Gamma \subset X \times \mathcal P(Y)$ be analytic, and $c\colon X\times Y \rightarrow \R \cup \{\pm\infty\}$ be Borel measurable.
	Then there exists an analytic set $\hat \Gamma \subset X\times Y$ with the following properties:
	\begin{enumerate}[label =(\roman*)]
	\item \label{it:Gammahat1}
		For any $(x,p) \in \Gamma$ we have that $p$ is concentrated on the fibre $\hat \Gamma_x = \{y \in Y \colon (x,y) \in \hat \Gamma\}$, i.e., $p(\hat\Gamma_x) = 1$.
	\item \label{it:Gammahat2}
		For any $(x,y) \in \hat\Gamma$ we find $(x,p) \in \Gamma$ and a Borel measurable set $K \subset \hat \Gamma_x$ such that
	\begin{enumerate}
		\item $c$ restricted to the fibre $\{x\} \times K$ is continuous,
		\item $y\in \supp(p) \cap K$ and
			\begin{align*}
				\int_{B_\delta(y) \cap K} \frac{c(x,z)}{p(B_\delta(y)\cap K)} p(dz) \rightarrow c(x,y)\quad\text{for }\delta\searrow 0. 
			\end{align*}
	\end{enumerate}
	\end{enumerate}
\end{lemma}


\begin{proof}
	Without loss of generality, we can assume that $c$ is bounded. We follow similar ideas as \cite{Fe81a}. 
	
	{To illustrate the idea, let for a moment $K\subset Y$ be any measurable set, and write $c_x(y) := c(x,y)$.
	Continuity of $c_x \vert_{\{x\} \times K}$ means that for any open set $O \subset \R$ the preimage $c_x^{-1}(O) \cap K$ is open in the trace topology on $Y \cap K$.
	For any radius $\alpha > 0$ and open set $O \subset \R$, we can consider an $\alpha$-doughnut around $c_x^{-1}(O)$, i.e.,
	\[
		\tilde A(\alpha) := \left\{y \in Y \colon \exists z \in c^{-1}_x(O) \text{ s.t.\ } d_Y(y,z) < \alpha \right\} \setminus c_x^{-1}(O).
	\]
	Clearly, $\tilde A(\alpha) \cup c_x^{-1}(O)$ is open and $c_x^{-1}(O) = (\tilde A(\alpha) \cup c_x^{-1}(O)) \cap \tilde A^c(\alpha)$, thus, $c_x^{-1}(O)$ is a relatively open subset of $\tilde A^c(\alpha)$.
	Note that for any $p \in \mathcal P(Y)$, we have by outer regularity that $p(\tilde A(\alpha))$ vanishes with $\alpha$.	

	Next, we generalize the above reasoning adequately to the product space $X \times Y$ and countably many open sets. 
	Denote by $\{U_n\}_{n\in\N}$ a countable basis of the topology on $\R$. 
	In analogy to $\tilde A$, we define for any sequence of radii in $Y$-direction $\alpha = (\alpha_n)_{n \in \N} \in \R_+^\N$, $\alpha_n > 0$ for all $n \in \N$, the $\mathcal B(X\times Y)$-measurable set
	\[
		A(\alpha) := \bigcup_{n\in\N} \left\{(x,y) \in X \times Y \colon \exists (x,z) \in c^{-1}(U_n)\text{ s.t.\ }d_Y(y,z) < \alpha_n \right\} \setminus c^{-1}(U_n).
	\]
	We see likewise that $c^{-1}(U_n) \cap A(\alpha)^c \cap \{x\} \times Y$ is relatively open in $A(\alpha)^c \cap \{x\} \times Y$.
	As $\{U_n\}_{n\in\N}$ forms a basis of the topology on $\R$, we find for any open set $O \subset \R$
	a subset $N \subset \N$ with $\bigcup_{n \in N} U_n = O$, thus,
	\[
		c^{-1}(O) \cap A(\alpha)^c \cap \{x \} \times Y = \bigcup_{n \in N} c^{-1}(U_n)	\cap A(\alpha)^c \cap \{x \} \times Y,
	\]
	is also relatively open in $A(\alpha)^c \cap \{x \} \times Y$, and consequently $c\vert_{A(\alpha)^c \cap \{x\} \times Y}$ is continuous for all $x\in X$.
	
	Let $(x,p,\alpha) \in X\times \mathcal P(Y)\times\R_+^\N$ and $\alpha_n > 0$ for all $n\in\N$, $K := \proj_Y(A(\alpha)^c \cap \{x\} \times Y)$.
	We have shown for any $y \in \supp(p\vert_{K})$ that $\delta \searrow 0$ implies
	\begin{align*}
		\frac{1}{p(K \cap B_\delta(y))} p\vert_{K\cap B_\delta(y)} \to \delta_y\quad\text{in }\mathcal P(Y).
	\end{align*}
	Since $c$ is bounded and continuous on $\{x\}\times K$, we have for $\delta \searrow 0$
	\begin{equation}
		\label{eq:cost continuous}
		\int_{K\cap B_\delta(y)} \frac{c(x,z)}{p(K\cap B_\delta(y))} p(dz) \rightarrow c(x,y).
	\end{equation}

	For any $(x,p) \in X \times \mathcal P(Y)$ we have by outer regularity of $p$ that for any $n \in \N$ and
	$\delta \searrow 0$
	\[
		p\left(\left\{ y \in Y \colon \exists z \in c^{-1}_x(U_n) \text{ s.t.\ }d_Y(y,z) < \delta \right\}\right) \to p\left(c_x^{-1}(U_n)\right).
	\]
	Therefore, for any $\epsilon > 0$ there is $\alpha \in \R_+^\N$ with $\alpha_n > 0, n \in \N$ such that
	\[
		p\left(\left\{ y \in Y \colon \exists z \in c^{-1}_x(U_n) \text{ s.t.\ }d_Y(y,z) < \alpha_n \right\} \setminus c_x^{-1}(U_n)\right) < \frac{\epsilon}{2^n},
	\]
	whence
	\[
		\delta_x \otimes p\left(A(\alpha)\right) \leq \sum_{n \in \N} p\left(\left\{y \in Y \colon \exists z \in c_x^{-1}(U_n) \text{ s.t.\ }d_Y(y,z) < \alpha_n \right\}  \right) < \epsilon.
	\]
	The set
	\[
		M^\epsilon := \left\{ (x,p,\alpha) \in X\times \mathcal P(Y) \times \R_+^\N\colon \delta_x \otimes p(A(\alpha)) < \epsilon\right\}
	\]
	satisfies that $\proj_{X \times \mathcal P(Y)} M^\epsilon = X \times \mathcal P(Y)$.

	We claim that $(x,p,\alpha) \mapsto \delta_x \otimes p\vert_{A(\alpha)^c}$ and $M^\epsilon$ are both Borel measurable:
	Evidently, $\alpha \mapsto A(\alpha)$ meets the requirements of Lemma~\ref{lem:measurable indicator} below, and consequently the function $(x,y,\alpha)\mapsto f_A(x,y,\alpha):=  \mathbbm{1}_{A(\alpha)}(x,y)$ is Borel measurable.
	From here, we deduce Borel measurability of
	\[
		(x,p,\alpha) \mapsto \delta_x \otimes p(\{x\} \times B \setminus A(\alpha)) = \int_B \left|1 -  f_A(x,y,\alpha)\right|\, p(dy),
	\]
	for every $B\in \mathcal B(Y)$, which yields our claim.

	Finally, we are in a position to define $\hat \Gamma$:
	Since $\Gamma$ is analytic and $M^\epsilon$ is Borel measurable, the set $M^\epsilon_\Gamma := M^\epsilon \cap \Gamma \times \mathbb R^\N_+$ is yet again analytic.
	Consider the analytic set
	\begin{align*}
		\Theta^\epsilon :=& \left\{(x,p,\alpha,y) \in X\times \mathcal P(Y) \times \R^N_+ \times Y  \colon \exists (x,p,\alpha) \in M^\epsilon_\Gamma \text{ s.t.\ }y \in \supp(p), \mathbbm 1_{A(\alpha)}(x,y) = 0\right\}
	\\	=& M^\epsilon_\Gamma \cap \left\{(x,p,\alpha,y) \in X \times \mathcal P(Y) \times \R^\N_+ \times Y \colon y \in \supp(p), f_A(x,y,\alpha) = 0 \right\},
	\end{align*}
	where we used that $Y$ admits a countable basis $(O_k)_{k\in\N}$ of its topology, whereby
	\[
		y \in \supp(p) \iff \forall k \in \N\colon \text{either }p(O_k) > 0 \text{ or }y\notin O_k.	
	\]
	Since $\Theta^\epsilon$ is analytic, its $X\times Y$-projection is analytic too. Thus,
	\[
		\hat \Gamma := \bigcup_{k \in \N}\proj_{X \times Y} \Theta^{\frac{1}{k}}
	\]
	is analytic.
	Recall that for any $(x,p) \in \Gamma$ and $k \in \N$ there is $\alpha^k \in (0,\infty)^\N$ with $\delta_x \otimes p(A(\alpha^k)) < \frac{1}{k}$.
	As for every $k \in \N$ we have
	\begin{align*}
		p(\hat \Gamma_x) &= \delta_x \otimes p(\hat \Gamma \cap X \times \supp(p)) \geq \delta_x \otimes p(\{x \} \times \{y \in Y \colon f_A(x,y,\alpha^k) = 0\}) 
	\\	&= 1 - \delta_x \otimes p(A(\alpha^k)) > 1 - \frac{1}{k},
	\end{align*}
	we derive that $\hat \Gamma$ satisfies item \ref{it:Gammahat1}.

	On the other hand, if $(x,y) \in \hat \Gamma$, then there is $k \in \N$ and $(x,p,\alpha) \in M^\epsilon_\Gamma$ with $(x,p,\alpha,y) \in \Theta^{\frac{1}{k}}$.
	Set $K := \proj_Y(A(\alpha)^c \cap \{x\} \times Y)\cap \supp(p)$ and note that
	$\{x \} \times \{p\} \times \{\alpha\} \times K \subset \Theta^\epsilon$, that means $K \subset \hat \Gamma_x$.
	Recall \eqref{eq:cost continuous}. By the paragraph preceding \eqref{eq:cost continuous}, we have continuity of $c\vert_{\{x\} \times K}$, and conclude that item \ref{it:Gammahat2} is satisfied.	}
\end{proof}

\begin{lemma}\label{lem:measurable indicator}
	Let $A\colon \R^\N_+ \to \mathcal B(X\times Y)$ be given s.t.
	\begin{enumerate}[label = (\alph*)]
	\item for any\footnote{In this part we write $\alpha \leq  \beta\iff \alpha_i\leq \beta_i$ for all $i$.} $\alpha,\beta\in\R^\N_+$, $\alpha \leq  \beta$ we have $A(\alpha) \subset A(\beta)$,
		\item for any $\{\alpha^k\}_{k \in \N} \subset \R^\N_+$ { with $\alpha^k_n\nearrow \alpha_n, n \in \N$ and $\alpha=(\alpha_n)_{n \in \N} \in \R^\N_+$}, we have $A(\alpha^k) \nearrow A(\alpha)$.
	\end{enumerate}
	Then the function
	\begin{align*}
		f_A\colon X\times Y\times \R^\N_+ \to \R\colon (x,y,\alpha) \mapsto \mathbbm{1}_{A(\alpha)}(x,y)
	\end{align*}
	is Borel measurable.
\end{lemma}

\begin{proof}
	The function $f_A$ is the indicator function of the set
	$$\mathcal A := \left\{(x,y,\alpha) \in X\times Y\times \R^\N_+\colon (x,y) \in A(\alpha) \right\} = \bigcup_{\alpha \in \R^\N} A(\alpha) \times \{\alpha\}.$$
	We show that $\mathcal A$ coincides with the Borel measurable set $\mathcal A'$,
	$$\mathcal A' := \bigcup_{q \in \mathcal Q} A(q) \times \bigotimes_{n\in\N} [q_n,\infty),$$
	where $\mathcal Q := \{\alpha \in \Q^\N_+\colon \exists n\in\N\text{ s.t. }\alpha_k = 0~\forall k\geq n\}$. From property $(a)$ of $A$ we deduce $\mathcal A \supset \mathcal A'$. Now let $(x,y,\alpha) \in \mathcal A$, then we find a sequence $\{q^k\}_{k\in\N}\subset \mathcal Q$ with $q^k \nearrow \alpha$, and by $(b)$, there exists an index {$k_0\in\N$} such that $(x,y) \in A(q^k)$ for all $k\geq k_0$. Thus,
	$$(x,y,\alpha) \in A(q^{k_0}) \times \bigotimes_{n\in\N} [q^{k_0}_n,\infty) \subset \mathcal A',$$
	and $\mathcal A \subset \mathcal A'$. Therefore $\mathcal A$ is a Borel measurable set and $f_A$ a Borel measurable function.
\end{proof}

We conclude this section by showing the following measurable variant of the Lebesgue decomposition theorem, quoted in Example \ref{ex:2}.

\begin{proposition}\label{prop:measurability of lebesgue decomp}
	Let $\mathcal M_+(X)$ the space of all finite measures on $X$ be equipped with the topology of weak convergence of measures. Then the map
	\begin{align*}
		T\colon \mathcal M_+(X)\times \mathcal M_+(X) \rightarrow \mathcal M_+(X)\times \mathcal M_+(X),\\ (p,q)\mapsto (q_{ac,p},q_{s,p}),
	\end{align*}
	where $T(p,q)$ is the unique Lebesgue decomposition of $q$ w.r.t.\ $p$, i.e.,
	\begin{align*}
	q_{ac,p} + q_{s,p} = q,\quad q_{ac,p} \ll p,\quad q_{s,p}\perp p,
	\end{align*}
	is measurable.
\end{proposition}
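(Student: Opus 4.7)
The strategy is to reduce the claim to measurability of real-valued maps $(p,q)\mapsto q_{ac,p}(\phi)$ for $\phi\in C_b(X)$. Indeed, the Borel $\sigma$-algebra on $(\mathcal M_+(X),\text{weak})$ is generated by the evaluations $p\mapsto p(\phi)$, and a standard Dynkin class argument shows that $p\mapsto p(A)$ and $p\mapsto\int_A\phi\,dp$ are Borel for every Borel set $A\subset X$. Since $q_{s,p}(\phi)=q(\phi)-q_{ac,p}(\phi)$, handling the $q_{ac,p}$-component suffices.

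Assume $X$ is Polish (as throughout the paper) and fix a nested sequence of countable Borel partitions $\mathcal P_n=\{A^n_i\}_i$ of $X$ with $\sigma(\bigcup_n\mathcal P_n)=\mathcal B(X)$. For each pair $(p,q)$ set $\mu:=p+q$ and consider the $\sigma(\mathcal P_n)$-measurable simple function
\[
f_n(x;p,q):=\sum_i\frac{p(A^n_i)}{\mu(A^n_i)}\,\mathbf 1_{A^n_i}(x),\qquad 0/0:=0.
\]
The sequence $(f_n)_n$ is a Doob martingale on $(X,\mu)$ with $\mu$-a.s.\ and $L^1(\mu)$ limit $f:=dp/d\mu$. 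Writing $p=f\mu$ and $q=g\mu$ with $g:=dq/d\mu$, the Lebesgue decomposition of $q$ relative to $p$ has the explicit form $q_{ac,p}=g\mathbf 1_{\{f>0\}}\mu$, $q_{s,p}=g\mathbf 1_{\{f=0\}}\mu$, so
\[
q_{ac,p}(\phi)=\int\phi\,\mathbf 1_{\{f>0\}}\,dq.
\]
Approximating $\mathbf 1_{\{f>0\}}=\lim_{N\to\infty}(1-e^{-Nf})$ and combining $f_n\to f$ $q$-a.s.\ (since $q\ll\mu$) with dominated convergence gives
\[
q_{ac,p}(\phi)=\lim_{N\to\infty}\lim_{n\to\infty}\sum_i\bigl(1-e^{-N\,p(A^n_i)/\mu(A^n_i)}\bigr)\int_{A^n_i}\phi(x)\,q(dx).
\]

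The inner expression is a countable sum of products of Borel functions of $(p,q)$, hence itself Borel, and iterated pointwise limits preserve Borel measurability. This completes the argument. The only delicate points are confirming that $p\mapsto p(A^n_i)$ is Borel on $(\mathcal M_+(X),\text{weak})$ and that a generating countable partition scheme exists; both are routine for Polish $X$. Beyond that, the proof is essentially a parameter-dependent version of the classical martingale proof of the Lebesgue decomposition, and I do not anticipate further obstacles.
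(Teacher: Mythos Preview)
Your argument is correct and takes a genuinely different route from the paper. The paper follows Titkos' characterisation of the singular part,
\[
q_{s,p}(A)=\inf_{\{B\in\mathcal B(X):\,p(B)=0\}}q(A\setminus B),
\]
and the main work is to rewrite this uncountable infimum as a countable one: first replacing Borel sets $B$ by elements of a countable ring generated by a basis of open sets, and then by a countable family of continuous test functions, so that $(p,q)\mapsto q_{s,p}(A)$ becomes a countable infimum of jointly Borel maps. Your approach instead passes to the dominating measure $\mu=p+q$, realises $f=dp/d\mu$ as the $\mu$-a.s.\ limit of the explicit martingale $f_n$ built from a fixed generating sequence of finite partitions, and then reads off $q_{ac,p}=\mathbf 1_{\{f>0\}}\,q$ via the smooth truncation $1-e^{-Nf}$. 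The key point---that the partitions, and hence the approximants $f_n$, do not depend on $(p,q)$---is exactly what makes the double limit a Borel function of $(p,q)$.

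Both approaches are short; yours is entirely self-contained and rests only on Doob's martingale convergence theorem and routine Dynkin-class facts, whereas the paper's argument is closer in spirit to the structural proof of existence of the Lebesgue decomposition and leans on the Titkos formula. Your verification that $(p,q)\mapsto p(A)$ and $(p,q)\mapsto\int_A\phi\,dq$ are Borel, and that a nested sequence of finite partitions generating $\mathcal B(X)$ exists on a Polish space, is indeed routine; the dominated-convergence steps go through because $0\le f_n\le 1$ and $\phi$ is bounded. I see no gaps.
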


\begin{proof}
Essentially we are defining a family of measurable functions: for any $\delta>0$ and $A\in \mathcal B(X)$ let
\begin{align}\label{eq:def Ffdelta}
\begin{split}
F_{\delta,A} \colon \mathcal M_+(X)\times \mathcal M_+(X) \times \mathcal B(X) \rightarrow \R,\\
(p,q,B)\mapsto \begin{cases} q(A\setminus B)& p(B)<\delta,\\ q(A)&\text{else.}\end{cases}
\end{split}
\end{align}
In turn, this allows us to define $T(p,q)(A)$ as a the limit of countable infima of measurable functions. Since $X$ is Polish there exists a countable family $\mathcal O$ of open sets on $X$ such that $\sigma(\mathcal O) = \mathcal B(X)$. Denote by $\mathcal R(\mathcal O)$ the set-theoretic ring generated by $\mathcal O$, which is again countable. Then, for any $\epsilon>0$, $A\in \mathcal B(X)$ and $p\in\mathcal P(X)$ there is a set $A_\epsilon \in \mathcal R(\mathcal O)$ such that
\begin{equation}
	\label{eq:ring approx}
	p(A\setminus A_\epsilon \cup A_\epsilon\setminus A) < \epsilon.
\end{equation}
Using the classical Lebesgue decomposition theorem, for any pair $(p,q) \in
\mathcal M_+(X) \times \mathcal M_+(X)$ there is  a set $\tilde B \subset X$ with $p(\tilde B) = 0$ and 
\[
	q_{ac,p}(A) = q(A \setminus \tilde B) \quad \forall A \in \mathcal B(X).
\]
By \cite[Proposition 4.5.3]{Bo00} we have
\[
	\lim_{\delta \searrow 0} \sup_{B \in \mathcal B(X) \colon p(B) < \delta} q_{ac,p}(B) = 0,
\]
whence,
\begin{align*}
	q_{ac,p}(A) &= \lim_{\delta \searrow 0} \inf_{B \in \mathcal B(X) \colon p(B) < \delta} q_{ac,p}(A\setminus B)
\\	&= \lim_{\delta \searrow 0} \inf_{B \in \mathcal B(X) \colon p(B) < \delta} q_{ac,p}(A\setminus(\tilde B \cup B)) + q_{s,p}(A\setminus(\tilde B \cup B))
\\	&= \lim_{\delta \searrow 0} \inf_{B \in \mathcal B(X) \colon p(B) < \delta} q(A \setminus B)
\\	&= \lim_{\delta \searrow 0} \inf_{B \in \mathcal R(\mathcal O) \colon p(B) < \delta} q(A \setminus B),
\end{align*}
where we used for the last equality the approximation property \eqref{eq:ring approx}.

Thus $F_A\colon \mathcal M_+(X)\times \mathcal M_+(X)\rightarrow \R$ defined by
\begin{align}\label{eq:measurable}
F_A(p,q) := q_{ac,p}(A) = \inf_{\delta\searrow 0}\inf_{\{B\in\mathcal B(X)\colon p(B) { < \delta} \}} q(A\setminus B) = \lim_{k \to \infty} \inf_{B \in \mathcal R(\mathcal O)} F_{\frac{1}{k},A}(p,q,B)
\end{align}
is Borel measurable, {and as $A\in\mathcal B(X)$ was arbitrary we conclude
that $T$ is measurable.}
\end{proof}

\bibliography{joint_biblio}{}
\bibliographystyle{abbrv}
\end{document}